\documentclass[10pt]{article}
\title{Star Quasiconvexity: a Unified Approach for Linear Convergence of First-Order Methods Beyond  Convexity}
\author{Phan Quoc Khanh\thanks{Analytical and Algebraic Methods in Optimization Research Group (AAMO), Faculty of Mathematics and Statistics, Ton Duc Thang University, Ho Chi Minh City, Vietnam. Email: phanquockhanh@tdtu.edu.vn; pqkhanhus@gmail.com, ORCID-ID: 0000-0002-0524-0498}
\and
Felipe Lara\thanks{Instituto de Alta investigaci\'on (IAI), Universidad de Tarapac\'a, 
Arica, Chile. E-mail: felipelaraobreque@gmail.com; flarao@academicos.uta.cl. 
Web: felipelara.cl, ORCID-ID: 0000-0002-9965-0921} }

\usepackage{amsmath}
\usepackage{amsthm}
\usepackage{amssymb}
\usepackage{multicol}
\usepackage[active]{srcltx}
\usepackage{algorithm}
\usepackage{array}
\usepackage[dvipsnames,svgnames]{xcolor}
\usepackage[notcite,notref]{showkeys}
\usepackage{tikz}
\usepackage{subfig}
\usepackage{graphicx}
\usepackage{pgfplots}
\usepackage{xcolor}
\usepackage{ulem}  
\setcounter{MaxMatrixCols}{30}

\usepackage{comment}
\providecommand{\U}[1]{\protect \rule{.1in}{.1in}}
\newtheorem{theorem}{Theorem}

\newtheorem{corollary}[theorem]{Corollary}

\newtheorem{definition}[theorem]{Definition}
\newtheorem{example}[theorem]{Example}

\newtheorem{lemma}[theorem]{Lemma}

\newtheorem{proposition}[theorem]{Proposition}
\newtheorem{remark}[theorem]{Remark}


\begin{document}

\maketitle

\begin{abstract}
\noindent 
We introduce and study a new class of generalized convex functions termed star quasiconvex functions. This class includes convex, star-convex, quasiconvex, quasar-convex, and positively homogeneous functions of any degree $p>0$ as special cases. Furthermore, we provide several characterizations of this class covering both nonsmooth and differentiable cases. In particular, in the general nonsmooth case, star quasiconvex functions are characterized by functions for which all its sublevel sets are  star-shaped at a minimizer, while in the differentiable case strongly star quasiconvex functions coincides with those satisfying the restricted secant inequality property, and star quasiconvex functions are related with variationally coherent functions, thereby providing a rich framework for differentiable first-order methods. Additionally, we develop standard properties of the proximity operator and prove that the proximal point algorithm converges linearly to the unique solution when applied to strongly star quasiconvex functions defined over closed star-shaped sets that are not necessarily convex.

\medskip

\noindent{\small \emph{Keywords}: Nonconvex optimization; Gradient methods; Proximal point methods; Linear convergence; Star-shaped functions}

\medskip
\noindent {\bf AMS subject classifications:} 90C26; 90C30; 65K05
\end{abstract}

\section{Introduction}

Let $h: \mathbb{R}^{n} \rightarrow \overline{\mathbb{R}} := \mathbb{R} \cup \{\pm \infty\}$ be a proper function. The most classical problem in optimization is the unconstrained minimization problem:
\begin{equation}\label{problem:min}
 \min_{x \in \mathbb{R}^{n}} \, h(x). \tag{P}
\end{equation}
This paper investigates a generalized notion of convexity, specifically, a definition based purely on geometry, without imposing any additional topological properties, that guarantees linear convergence of first-order methods for solving the minimization problem \eqref{problem:min}.

The standard convexity assumption for achieving linear convergence in gradient and proximal point methods is that the objective function $h$ is strongly convex. This property is highly desirable as it guarantees the existence and uniqueness of a solution, implies quadratic growth conditions, and ensures the Polyak-$\L$ojasiewicz (PL) property, among other benefits.

Motivated by these properties, several authors have introduced broader classes of functions that retain these convergence guarantees. Notable examples are the following.

Strongly quasiconvex functions were introduced by Polyak in 1966 as a particular case of uniformly quasiconvex functions \cite{P}. This was one of the first  generalized functions possessing these guarantees. Although not immediately proven, it was later established that this class satisfies the PL property and quadratic growth, leading to linear convergence rates for gradient-type methods \cite{HLMV, LMV} and pro\-xi\-mal point methods \cite{Lara-9}.

Strongly star-convex functions were defined by Nesterov and Polyak in 2006 \cite{NP-2006}. This class also ensures linear convergence for both gradient and proximal point methods, as demonstrated in \cite{HSS} and \cite{BLL}, respectively.

Strongly quasar-convex functions were introduced under various names in \cite{GG, GGK-2017, HSS} (among others). This class has been shown to yield linear convergence for gradient-type methods \cite{GG, GGK-2017, HADR, HSS} and, recently, for the proximal point method in \cite{BLL}.

The relationship between these classes is summarized below:
 \begin{align}
  \begin{array}{ccccccc}
  {\rm strongly ~ quasiconvex} & \Longleftarrow & {\rm strongly ~ convex} 
  & \Longrightarrow & {\rm strongly ~ star-convex} \notag \\
  \, & \, & \, & \, & \Downarrow  \notag \\
  \, & \, & \, & \, & {\rm strongly ~ quasar-convex} \notag 
  \end{array}
 \end{align} 

These notions have gained increasing attention in the optimization and machine learning communities due to their interesting theoretical properties for accelerating first-order methods, as well as in several machine learning applications (see \cite{BLL,GLM-Survey,GGK-2017,HLMV,HADR,HSS,LMV,LV,WW,ZY} and references therein). 

Since all the previous notions includes strongly convex functions and they provide linear convergence for first-order type methods, the authors in \cite{HLMV,LMV,LV} studied the relation between (strongly) quasiconvex and (strongly) quasar-con\-vex functions in the differentiable case. It was shown in \cite[Subsection 3.2]{HLMV} that the two classes are not related each other and, as a consequence, the following natural question arises:

\medskip

{\it
\centerline{What is the class of generalized (strongly) convex functions} 
\centerline{that guarantees global linear convergence rate for first-order type methods and} 
\centerline{includes both (strongly) quasiconvex and (strongly) quasar-convex functions?}
}

\medskip

{\bf Our Contribution:}
We introduce the notion of {\it (strong) star qua\-si\-con\-ve\-xi\-ty} for solving the above question by proving that both (strongly) quasiconvex and (strongly) quasar-convex functions are (strongly) star quasiconvex, an inclusion which is proper since we show examples of strongly star quasiconvex functions which are neither quasiconvex nor quasar-convex (see Example \ref{4:leaf}). Second, we provide four di\-ffe\-rent characterizations for (strongly) star quasiconvex functions: (i) by cha\-rac\-te\-ri\-zing star quasiconvex functions via the star-shapedness of their sublevel sets in Theorem \ref{geo:charac}; (ii) by showing that (strongly) star quasiconvex functions are (strongly) quasiconvex functions along rays starting from their unique minimizer in Theorem \ref{main:rel}; (iii) by finding a stronger property for the continuous case in Corollary \ref{coro:Charact}; and (iv) by characterizing the behaviour of its gradient in the differentiable case in Proposition \ref{char:diff}.  In particular, as a consequence of the characterization in the differentiable case, we obtain that strongly star quasiconvex functions coincides with those satisfying the restricted secant inequality property \cite{Yi,ZY}, while star quasiconvex functions are related to variationally coherent functions \cite{ZBoyd} (see also \cite{SS}). Third, several interesting properties for smooth and nonsmooth strongly star quasiconvex functions are derived from the previous ana\-ly\-sis as: quadratic growth, the PL property, and also $2$-supercoercivity as well as relationships with the proximity operator on star-shaped sets among others. Notably, some of these properties are even novel for strong quasar-convexity and/or strong quasiconvexity. Fourth, we revisited the linear convergence results of the heavy ball and Nesterov accelerations. Fifth, we ensure linear convergence of the proximal point algorithm (PPA henceforth) for strongly star quasiconvex on star-shaped sets, extending the results in \cite{BLL,Lara-9} from strongly quasar-convex and strongly quasiconvex, respectively, to strongly star quasiconvex functions. Furthermore, as far as we know, this is the first convergence result for the PPA on star-shaped sets. Finally, we present conclusions and future research lines.

\medskip

The structure of the paper is as follows: In Section \ref{sec:02}, we recall basic definitions about generalized convexity. In Section \ref{sec:03}, we present the motivation for introducing (strong) star quasiconvex functions, four different characterizations for this class of functions as well as interesting properties for developing first-order type algorithms. In Section \ref{sec:04}, we present the linear convergence of the gradient method as well as for its heavy ball and Nesterov acce\-le\-ra\-tions. In Section \ref{sec:05}, we ensure linear convergence for the PPA on star-shaped sets. Conclusions and future research directions are described in Section \ref{sec:06}.

\section{Preliminaries}\label{sec:02}

The inner product in $\mathbb{R}^{n}$ and the Euclidean norm are denoted by $\langle \cdot,\cdot \rangle$ and $\lVert \cdot \rVert$, respectively. We denote  $\mathbb{R}_{+} := [0, + \infty[$, $ \mathbb{R}_{++} := \, ]0, + \infty[$ and $\mathbb{N}_{0} := \mathbb{N} \cup \{0\}$. Let $K \subseteq \mathbb{R}^{n}$. Its asymptotic (recession) cone is defined by
$$K^{\infty} := \left\{ u \in \mathbb{R}^{n}: ~ \exists ~ t_{k} \rightarrow + \infty, ~ \exists~ x_{k} \in K, ~ \frac{x_{k}}{t_{k}} \rightarrow u \right\}.$$ 
Furthermore, for any set $K \subset \mathbb{R}^{n}$, it follows from \cite[Proposition 2.1.2]{AT} that $K^{\infty}=\{0\}$ if and only if $K$ is bounded.

Given any $x, y, z \in \mathbb{R}^{n}$ and any $\beta \in \mathbb{R}$, the following relations hold:
\begin{align}
  \lVert \beta x + (1-\beta) y \rVert^{2} &= \beta \lVert x \rVert^{2} +
  (1 - \beta) \lVert y\rVert^{2} - \beta(1 - \beta) \lVert x - y \rVert^{2},
 \label{iden:1}\\
 \langle x - z, y - x \rangle &= \frac{1}{2} \lVert z - y \rVert^{2} -
 \frac{1}{2} \lVert x - z \rVert^{2} - \frac{1}{2} \lVert y - x \rVert^{2}.
 \label{3:points}
\end{align}

Let $K \subseteq \mathbb{R}^{n}$ be nonempty and $x_{0} \in K$.  $K$ is said to be star-shaped at the point $x_{0}$ if for every $x \in K$, the segment $[x_{0}, x] \subseteq K$.

\medskip

Given any extended-valued function $h: \mathbb{R}^{n} \rightarrow
\overline{\mathbb{R}}$, the effective domain of $h$ is defined by
${\rm dom}\,h := \{x \in \mathbb{R}^{n}: h(x) < + \infty \}$. It is said
that $h$ is proper if ${\rm dom}\,h$ is nonempty and $h(x) > - \infty$ for
all $x \in \mathbb{R}^{n}$. The notion of properness is important when
dealing with minimization pro\-blems.

The set ${\rm epi}\,h := \{(x,t) \in \mathbb{R}^{n} \times \mathbb{R}: h(x) \leq t\}$ is the epigraph of $h$, $S_{\delta} (h) := \{x \in \mathbb{R}^{n}: h(x) \leq \delta\}$ is the sublevel set of $h$ at the height $\delta \in \mathbb{R}$, and  ${\rm argmin}_{ \mathbb{R}^{n}} h$ is the set of all minimal points of $h$. A function $h$ is lower se\-mi\-continuous (lsc henceforth) at $\overline{x} \in \mathbb{R}^{n}$ if for any sequence $\{x_k\}_{k} \subseteq \mathbb{R}^{n}$ with $x_k \rightarrow \overline{x}$, $h(\overline{x}) \leq \liminf_{k \rightarrow + \infty} h(x_k)$. Furthermore, the current convention $\sup_{\emptyset} h := - \infty$ and $\inf_{\emptyset} h := + \infty$ is adopted.

A function $h$ with convex domain is said to be:
\begin{itemize}
 \item[$(a)$] (strongly) convex on ${\rm dom}\,h$ when there exists $\gamma \geq 0$ such that if for all $x, y \in \mathrm{dom}\,h$ and all $\lambda \in [0, 1]$, we have
 \begin{equation*}\label{strong:convex}
  h(\lambda y + (1-\lambda)x) \leq \lambda h(y) + (1-\lambda) h(x) - \lambda (1 - \lambda) \frac{\gamma}{2} \lVert x - y \rVert^{2},
 \end{equation*}

 \item[$(b)$] (strongly) quasiconvex on ${\rm dom}\,h$ when there exists $\gamma \geq 0$ such that if for all $x, y \in \mathrm{dom}\,h$ and all $\lambda \in [0, 1]$, we have
 \begin{equation*}\label{strong:quasiconvex}
  h(\lambda y + (1-\lambda)x) \leq \max \{h(y), h(x)\} - \lambda(1 -
  \lambda) \frac{\gamma}{2} \lVert x - y \rVert^{2}.
 \end{equation*}
\end{itemize} 
\noindent Every (strongly) convex function is (strongly) quasiconvex, while the reverse statement does not holds (see \cite{CM-Book,HKS,Lara-9}). Furthermore, when we say that a function is {\it strongly convex (resp., quasiconvex) with modulus $\gamma \geq 0$} we refer to both strongly convex (resp., quasiconvex) when $\gamma > 0$ and convex (resp., qua\-siconvex) when $\gamma = 0$.

Quasiconvexity is the mathematical formulation of the assumption {\it tendency to diversification} on the consumers in consumer's preference theory (see \cite{D-1959}). Furthermore, the following geometric characterization is well-known:
 \begin{align*}
  h ~ \mathrm{is ~ convex} & \Longleftrightarrow \, \mathrm{epi}\,h ~
  \mathrm{is ~ a ~ convex ~ set.}\\ 
  h ~ \mathrm{is ~ quasiconvex} & \Longleftrightarrow \, S_{\delta} (h) ~ \mathrm{is ~ a ~ convex ~ set ~ for ~ all ~ } \delta \in \mathbb{R}.
 \end{align*}

A proper function $h: \mathbb{R}^{n} \rightarrow \overline{\mathbb{R}}$ is said to be:
\begin{itemize}
 \item[$(i)$] 2-supercoercive, if
 \begin{equation*}\label{2:super}
  \liminf_{\lVert x \rVert \rightarrow+ \infty} \frac{h(x)}{\lVert x
  \rVert^{2}} >0,
 \end{equation*}

 \item[$(ii)$] coercive, if
 \begin{equation*}\label{coercive}
  \lim_{\lVert x \rVert \rightarrow+ \infty} h(x) = + \infty
 \end{equation*}
 or equivalently, if $S_{\delta} (h)$ is bounded for all $\delta \in \mathbb{R}$.
\end{itemize}
Clearly, $(i) \Rightarrow(ii)$, but the converse
statements does not hold as the function $h(x) = \lvert x \rvert$ shows. 

We know by \cite[Theorem 1]{Lara-9} that strongly quasiconvex functions 
are $2$-supercoercive, as a consequence, lsc strongly quasiconvex functions have a unique minimizer on a closed convex set as we recall next.

\begin{lemma}\label{exist:unique} {\rm (\cite[Corollary 3]{Lara-9})}
 Let $K \subseteq \mathbb{R}^{n}$ be a closed and convex set and $h:
 \mathbb{R}^{n} \rightarrow \overline{\mathbb{R}}$ be a proper, lsc, and
 strongly qua\-si\-con\-vex function on $K \subseteq {\rm dom}\,h$ with
 modulus $\gamma> 0$. Then, ${\rm argmin}_{K} h$ is a singleton.
\end{lemma}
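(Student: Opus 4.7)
The plan is to establish existence and uniqueness of the minimizer separately, using coercivity for the former and the defining inequality of strong quasiconvexity for the latter.

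For \textbf{existence}, the key fact already cited in the excerpt is that lsc strongly quasiconvex functions are $2$-supercoercive \cite[Theorem 1]{Lara-9}, hence coercive, so every sublevel set $S_{\delta}(h)$ is bounded. I would fix any $x_{0}\in K$ (assuming $K\neq\emptyset$, since otherwise the conclusion is vacuous) and consider $\alpha:=\inf_{K} h$. Picking a minimizing sequence $\{x_{k}\}\subseteq K$ with $h(x_{k})\to\alpha$, the coercivity of $h$ forces $\{x_{k}\}$ to be bounded (it is eventually contained in $S_{h(x_{0})+1}(h)$). Extracting a convergent subsequence $x_{k_{j}}\to\overline{x}$, closedness of $K$ gives $\overline{x}\in K$, and lower semicontinuity yields
\[
 h(\overline{x}) \leq \liminf_{j\to+\infty} h(x_{k_{j}}) = \alpha,
\]
so $\overline{x}\in\mathrm{argmin}_{K} h$.

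For \textbf{uniqueness}, I would argue by contradiction. Suppose $x_{1},x_{2}\in\mathrm{argmin}_{K} h$ with $x_{1}\neq x_{2}$. By convexity of $K$, the midpoint $z:=\tfrac{1}{2}(x_{1}+x_{2})$ belongs to $K$. Applying the strong quasiconvexity inequality \eqref{strong:quasiconvex} with $\lambda=\tfrac{1}{2}$ gives
\[
 h(z) \leq \max\{h(x_{1}),h(x_{2})\} - \tfrac{1}{2}\cdot\tfrac{1}{2}\cdot\tfrac{\gamma}{2}\lVert x_{1}-x_{2}\rVert^{2} = \alpha - \tfrac{\gamma}{8}\lVert x_{1}-x_{2}\rVert^{2} < \alpha,
\]
which contradicts $\alpha=\inf_{K} h$. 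Hence $x_{1}=x_{2}$, and $\mathrm{argmin}_{K} h$ is a singleton.

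There is no real obstacle here: existence is a routine Weierstrass-type argument enabled by the already-quoted coercivity consequence of strong quasiconvexity, and uniqueness is an immediate one-line computation from the strict strengthening $\gamma>0$ gives in the defining inequality at the midpoint. The only subtle point worth noting explicitly is that convexity of $K$ is used precisely to ensure that this midpoint lies in $K$ so that the strong quasiconvexity inequality is applicable — this is why convexity (rather than mere star-shapedness) of $K$ appears in the hypotheses of this particular lemma, whereas later in the paper the authors relax this to star-shapedness under the star quasiconvex framework.
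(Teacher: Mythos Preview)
Your proof is correct. Note, however, that the paper does not actually prove this lemma: it is stated as a known result, cited verbatim from \cite[Corollary 3]{Lara-9}, with no argument given in the present paper. Your two-step approach (existence via the $2$-supercoercivity result \cite[Theorem 1]{Lara-9} plus a Weierstrass-type compactness argument, and uniqueness via the midpoint contradiction from \eqref{strong:quasiconvex}) is the natural and standard route, and it is essentially how the original reference \cite{Lara-9} proceeds as well. Your closing remark about why convexity of $K$ (rather than star-shapedness) is needed here is apt and correctly identifies the role of the hypothesis.
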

Furthermore, the unique minimizer $\overline{x} \in K$ of a strongly quasiconvex function $h$ (with modulus $\gamma > 0$) satisfies a quadratic growth condition (see \cite[Corollary 9]{HL-1}):
\begin{equation*}\label{qgc}
 h(\overline{x}) + \frac{\gamma}{4} \lVert y - \overline{x} \rVert^{2} \leq h(y), ~ \forall ~ y \in K.
\end{equation*}

A function $h: \mathbb{R}^{n} \rightarrow \mathbb{R}$ is said to be 
$L$-smooth on $K \subseteq \mathbb{R}^{n}$ if it is differentiable on 
$K$ and
\begin{equation*}\label{L:smooth}
 \lVert \nabla h(x) - \nabla h(y) \rVert \leq L \lVert x - y \rVert, ~ 
 \forall ~ x, y \in K.
\end{equation*}

For $L$-smooth functions, a fundamental result is the descent lemma, that is, if $h$ is a $L$-smooth function on a convex set $K$ with $L \geq 0$, then for every $x, y \in K$, we have
\begin{equation*}\label{descent:lemma}
 h(y) \leq h(x) + \langle \nabla h(x), y - x \rangle + \frac{L}{2} \lVert  x - y \rVert^{2}.
\end{equation*}

Let $K \subseteq \mathbb{R}^{n}$ be a convex set and $h: K \rightarrow \mathbb{R}$ be a differentiable function. Then, $h$ is strongly convex on $K$ with modulus $\gamma \geq 0$ if and only if 
\begin{equation*}\label{sconvex:diff}
 h(x) \geq h(y) + \langle \nabla h(y), x - y \rangle + \frac{\gamma}{2} \lVert y - x \rVert^{2}, ~ \forall ~ x, y \in K.
\end{equation*}

For differentiable strongly quasiconvex functions, we have the following result \cite{VNC-2} (see also \cite{HL-1}).  Let $K \subseteq \mathbb{R}^{n}$ be 
a convex and open set and $h: K \rightarrow \mathbb{R}$ be differentiable function. If $h$ is strongly quasiconvex with modulus $\gamma \geq 0$, then for every $x, y \in K$, we have 
 \begin{equation}\label{gen:char}
  h(x) \leq h(y) ~ \Longrightarrow ~ \langle \nabla h(y), x - y \rangle
  \leq -\frac{\gamma}{2} \lVert y - x \rVert^{2}.
 \end{equation}
Conversely, if \eqref{gen:char} holds, then $h$ is strongly quasiconvex with modulus $\frac{\gamma}{2}$ (see \cite[Theorems 1 and 6]{VNC-2} and \cite[Corollary 8]{HL-1}).

The previous result extends the well-known characterization for differentiable 
quasiconvex functions given by Arrow-Enthoven \cite{AE} ($\gamma = 0)$.

\medskip

Another generalized convexity notion, mainly motivated by its interesting 
algorithmic properties, is the notion of star-convexity (see \cite{NP-2006}). 

\begin{definition}\label{def:starconvex} {\rm (\cite{NP-2006})}
A proper function $h: \mathbb{R}^{n} \rightarrow \overline{\mathbb{R}}$ 
with $\overline{x} \in {\rm argmin}_{\mathbb{R}^{n}}\,h$ is said to be 
(strongly) star-convex if there exists $\gamma \geq 0$ such that for every $y \in \mathrm{dom}\,h$ and every $\lambda \in [0, 1]$, we have
 \begin{equation*}\label{star:convex}
  h(\lambda \overline{x} + (1-\lambda)y) \leq \lambda h(\overline{x}) + (1 - \lambda) h(y) - \lambda (1 - \lambda) \frac{\gamma}{2} \lVert y - \overline{x} \rVert^{2}.
 \end{equation*}
\end{definition}

Motivated by the previous definition, the following generalized convexity notion has been introduced and applied in several problems related to machine learning theory in virtue of its acce\-le\-ra\-ted properties for the convergence of the gradient method (see \cite{GGK-2017,HADR,HSS,LV,WW} and references therein). 

\begin{definition}\label{def:qconvex} {\rm (\cite{GG,HSS}, \cite{BLL})}
 A proper function $h: \mathbb{R}^{n} \rightarrow \overline{\mathbb{R}}$ with $\overline{x} \in {\rm argmin}_{\mathbb{R}^{n}}\,h$ is said to be 
 (strongly) quasar-convex if there exists $\beta \in \, ]0, 1]$ and $\gamma \geq 0$ such that for every $y \in {\rm dom}\,h$, we have
 \begin{equation*}
  h(\lambda \overline{x} + (1-\lambda)y) \leq \lambda \beta h(\overline{x}) + (1 - \lambda \beta) h(y) - \lambda \left(1 - \frac{\lambda}{2 - \beta} \right) \frac{\beta \gamma}{2} \lVert y - \overline{x} \rVert^{2}. \notag
 \end{equation*}
\end{definition}

It follows from Definition \ref{def:qconvex} that ${\rm argmin}_{\mathbb{R}^{n}}\,h$ is a singleton when $h$ is strongly quasar-convex ($\gamma > 0$). Furthermore, if $\beta = 1$, then a quasar-convex function re\-duces to a
strongly star-convex function ($\gamma > 0$) and star-convex function ($\gamma = 0$). 

In the differentiable case, we have the following useful characterization for
(strongly) quasar-convex functions. Let $K \subseteq \mathbb{R}^{n}$ be a 
convex and open set and $h: \mathbb{R}^{n} \rightarrow \mathbb{R}$ be a
differentiable function with ${\rm argmin}_{K}\,h \neq \emptyset$. Then $h$ 
is (strongly) quasar-convex with modulus $\beta \in \, ]0, 1]$ and $\gamma 
\geq 0$ on $K$ if and only if (see \cite[Lemma 10]{HSS})
\begin{equation*}\label{char:qconvex}
 h(\overline{x}) \geq h(y) + \frac{1}{\beta} \langle \nabla h(y), \overline{x} 
 - y \rangle + \frac{\gamma}{2} \lVert y - \overline{x} \rVert^{2}, ~ \forall ~ 
 y \in K.
\end{equation*}
 Hence, quasi-strongly convex functions \cite{NNG} are related to strong quasar-convexity. Furthermore, differentiable quasar-convex functions ($\gamma = 0$) are the functions for which:  
\begin{equation}\label{weak:quasiconvexity}
 \langle \nabla h(y), \overline{x} - y \rangle \leq \beta (h(\overline{x}) - h(y)), 
 ~ \forall ~ y \in K.
\end{equation}
Functions satisfying relation \eqref{weak:quasiconvexity} have been studied 
in the last years in di\-ffe\-rent fields, specially for accelerating the convergence 
of gradient type methods (see \cite{ADR,CEG} and references therein). 

Finally, given a nonempty closed set $K \subseteq \mathbb{R}^{n}$, the proximity operator on $K$ of parameter $\beta > 0$ of a proper lsc  function $h: \mathbb{R}^{n} \rightarrow \overline{\mathbb{R}}$ at $x \in \mathbb{R}^{n}$ is defined as the operator $\mathrm{Prox}_{\beta h} (K, \cdot): \mathbb{R}^{n} \rightrightarrows \mathbb{R}^{n}$ by
\begin{equation*}\label{prox:operator}
 \mathrm{Prox}_{\beta h} (K, x) = \mathrm{argmin}_{y \in K} \left\{ h(y) + \frac{1}{2 \beta} \Vert y - x \rVert^{2} \right\}.
\end{equation*}
When $K = \mathbb{R}^{n}$, we simple write ${\rm Prox}_{\beta h} (z) := {\rm Prox}_{\beta h} (\mathbb{R}^{n}, z)$. If $h$ is proper, lsc and convex, then ${\rm Prox}_{\beta h}$ turns out to be a single-valued ope\-ra\-tor (see, for instance, \cite[Proposition
12.15]{BC-2}).

For a further study on star convexity, quasiconvexity, quasar-convexity and its applications in economics and engineering among others, we refer to
\cite{ADR,BLL,CEG,CM-Book,HKS,HSS,Lara-9,LV,NP-2006,VNC-2,WW} and references therein.

\section{An Unifying Approach for Generalized Convex Functions}\label{sec:03}

\subsection{Motivation}

As mentioned in the introduction, the following definition (see \cite{NTV}) includes  (strongly) convex, (strongly) quasiconvex, (strongly) star-convex, and (strongly) quasar-convex functions.

\begin{definition}\label{ss:quasiconvex}
 Let $h: \mathbb{R}^{n} \rightarrow \overline{ \mathbb{R}}$ be a proper function and $\overline{x} \in {\rm argmin}_{\mathbb{R}^{n} }\,h$. Then, $h$ is (strongly) star quasiconvex with respect to $\overline{x}$ if there exists $\gamma \geq 0$ such that for every $y \in {\rm dom}\,h$ and every $\lambda \in [0, 1]$, we have
 \begin{align}\label{ss:qcx}
  h(\lambda \overline{x} + (1-\lambda)y) \leq h(y) - \lambda (1 - \lambda) \frac{\gamma}{2} \lVert y - \overline{x} \rVert^{2}.
 \end{align}
 It is said that $h$ is strongly star quasiconvex when $\gamma > 0$ and it is star quasiconvex when $\gamma = 0$, in both cases, with respect to a specific minimizer $\overline{x}$.
\end{definition}
We note immediately that when $\gamma > 0$, strongly star quasiconvex functions have a unique minimizer. As a consequence, in this case, we simple say that $h$ is strongly star quasiconvex with modulus $\gamma > 0$, without reference to a point.

\medskip

Strongly star quasiconvex functions are an extension of strongly quasiconvex functions since in relation \eqref{ss:qcx} one of the points is fixed at the minimum (which always exists for lsc strongly quasiconvex functions by Lemma \ref{exist:unique}), and it is also an extension of strongly quasar-convex functions since the term $\lambda \beta h(\overline{x}) + (1-\lambda\beta) h(y) \leq h(y) = \max\{h(\overline{x}), h(y)\}$ for all $\lambda \in [0, 1]$ and $\beta \in \, ]0, 1]$. 

\begin{proposition}\label{inclusion}
 Let $h: \mathbb{R}^{n} \rightarrow \overline{\mathbb{R}}$ be a proper function. 
 Then the following re\-la\-tion\-ship holds (qcx = quasiconvex and qconvex = quasar-convex): 
 \begin{align}
  \begin{array}{ccccccc}
   {\rm (strongly) ~ convex} & \overset{*}{\Rightarrow} & {\rm (strongly) ~ star ~ convex} & \Rightarrow & 
   {\rm (strongly) ~ qconvex} \notag \\
   \Downarrow & \, & \Downarrow & \, & \Downarrow  \notag \\
   {\rm (strongly) ~ qcx} & \overset{*}{\Rightarrow} & {\rm (strongly) ~ star ~ qcx} & \, & {\rm (strongly) ~ star ~ qcx} \notag 
   \end{array}
  \end{align}
  where ``$*$" denotes that ${\rm argmin}_{\mathbb{R}^{n}\,}h \neq \emptyset$ 
  is required. 
%
%
%
%
\end{proposition}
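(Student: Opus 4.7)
The plan is to verify each arrow in the diagram by direct substitution into the defining inequalities, since every implication amounts to either specializing a parameter or evaluating one of the two reference points at a minimizer. I would organize the verification by row of the diagram.

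For the top row, strong convexity gives strong quasiconvexity via $\lambda h(y) + (1-\lambda) h(x) \leq \max\{h(y), h(x)\}$; setting $x = \overline{x}$ in \eqref{strong:convex} yields strong star-convexity; and matching \eqref{star:convex} against the defining inequality of quasar-convexity with $\beta = 1$ shows that strongly star-convex implies strongly quasar-convex. For the middle row the same computations work with $\gamma = 0$, together with the observation that $\lambda h(\overline{x}) + (1-\lambda) h(y) \leq h(y)$ whenever $\overline{x}$ is a minimizer; the arrow convex $\overset{*}{\Rightarrow}$ star-convex requires $\mathrm{argmin}_{\mathbb{R}^n} h \neq \emptyset$ simply because star-convexity is defined relative to a specific minimizer. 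The vertical arrows between each strong class and its ordinary counterpart are then recovered by taking $\gamma = 0$ in the respective definition.

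The only genuinely new arrows involve the class introduced in Definition \ref{ss:quasiconvex}, namely strongly qcx $\Rightarrow$ strongly star qcx and qcx $\overset{*}{\Rightarrow}$ star qcx. Both are proved in one line: plug $x = \overline{x}$ into \eqref{strong:quasiconvex}, use that $\max\{h(y), h(\overline{x})\} = h(y)$ since $\overline{x}$ is a minimizer, and read the resulting inequality as \eqref{ss:qcx}; in the strong case Lemma \ref{exist:unique} supplies the minimizer, whereas in the non-strong case its existence is an assumption, which is exactly the meaning of the star. The remaining vertical arrow star-convex $\Rightarrow$ star qcx follows from $\lambda h(\overline{x}) + (1-\lambda) h(y) \leq h(y)$, and convex $\Rightarrow$ qcx is classical. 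The only ``obstacle'' is therefore notational, namely keeping track of which arrows need $\mathrm{argmin}_{\mathbb{R}^n} h \neq \emptyset$ and verifying that the modulus $\gamma$ is preserved along each substitution, which is consistent with the author's remark that the proof is easy and omitted.
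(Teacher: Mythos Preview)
Your proposal is correct and matches the paper's approach: the paper explicitly omits the proof as easy, and the argument you outline---specializing one point to $\overline{x}$, setting $\beta=1$ or $\gamma=0$ as appropriate, and using $\lambda h(\overline{x})+(1-\lambda)h(y)\le h(y)$ (respectively $\lambda\beta h(\overline{x})+(1-\lambda\beta)h(y)\le h(y)$) whenever $\overline{x}$ is a minimizer---is exactly the verification the paper alludes to in the paragraph preceding the proposition. The only caveat is that your appeal to Lemma~\ref{exist:unique} for the arrow strongly~qcx $\Rightarrow$ strongly star~qcx tacitly imports an lsc hypothesis not stated in Proposition~\ref{inclusion}; the paper is equally informal on this point.
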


\begin{proof}
 The proofs are straightforward, Hence, we only verify that (strong) quasar-convexity implies (strong) star quasiconvexity for $\gamma \geq 0$.

 Let $\beta \in \, ]0, 1]$ and $\gamma \geq 0$. Then, for every $\lambda \in [0, 1]$ we have
 \begin{align*}
  0 < \beta \leq 1 & \Rightarrow \frac{1}{2-\beta} \leq 1 \Rightarrow \, - \frac{\lambda}{2-\beta} \geq -\lambda   \Rightarrow \, - \left( 1 - \frac{\lambda}{2 - \beta} \right) \leq - \left( 1 - \lambda \right).
 \end{align*}
 Hence, it follows from Definition \ref{def:qconvex} that 
 \begin{align*}
  h(\lambda \overline{x} + (1-\lambda)y) & \leq \lambda \beta h(\overline{x}) + (1 - \lambda \beta) h(y) - \lambda \left(1 - \frac{\lambda}{2 - \beta} \right) \frac{\beta \gamma}{2} \lVert y - \overline{x} \rVert^{2} \\
  & \leq \lambda \beta h(\overline{x}) + (1 - \lambda \beta) h(y) - \lambda  (1 - \lambda) \frac{\beta \gamma}{2} \lVert y - \overline{x} \rVert^{2} \\
  & \leq h(y) - \lambda (1 - \lambda) \frac{\beta \gamma}{2} \lVert y - \overline{x} \rVert^{2}. 
 \end{align*}
 Thus, $h$ is strongly star quasiconvex with modulus $\gamma^{\prime} = \beta \gamma \geq 0$.
\end{proof}

\begin{remark}
 \begin{itemize}
  \item[$(i)$] All the reverse implications do not hold true in general. Indeed, almost all the counter-examples are well-known (see \cite{CM-Book,Lara-9,NP-2006}), and it remains to show an example of a strongly star quasiconvex function which is neither (strongly) quasiconvex nor (strongly) quasar-convex. This will be given in Example \ref{4:leaf} (below). 

  \item[$(ii)$] There is no relationship between (strongly) quasiconvex and (strong\-ly) quasar-convex functions. Indeed, the function in \cite[Example 9]{HLMV} is strong\-ly quasiconvex but not quasar-convex, while the function in Example \ref{exam:02} (below) is strongly quasar-convex but not quasiconvex. For differentiable func\-tions, sufficient conditions under which strong quasiconvexity implies quasar-convexity were provided in \cite{HLMV,LV}.

  \item[$(iii)$] From the proof of Proposition \ref{inclusion} we can observe that  both modulus involved in strong quasar-convexity $\gamma > 0$ and $\beta \in \, ]0, 1]$ influence the modulus of strong star quasiconvexity. Hence, if for instance $\beta \in \, ]0, 1]$ is very small, then the modulus of strong star quasiconvexity $\gamma^{\prime} = \beta \gamma$ will be small too, but still strictly positive.
 \end{itemize}
\end{remark}

In the next result, we characterize (strongly) star quasiconvex functions along segments. 

\begin{proposition}\label{char:onedim}
 A function $h: \mathbb{R}^{n} \rightarrow \overline{\mathbb{R}}$ is 
 strongly star quasiconvex with mo\-du\-lus $\gamma \geq 0$ if and only if for
 $\overline{x} \in {\rm argmin}_{\mathbb{R}^{n}}\,h$ and every $y \in
 \mathbb{R}^{n} \backslash \{\overline{x}\}$, the func\-tion 
 \begin{equation*}\label{eq:onedim}
  h_{y} (t) := h \left( \overline{x} + t \frac{y - \overline{x}}{\lVert y - \overline{x} \Vert} \right),
 \end{equation*}
 with $t \in [ 0, \lVert y - \overline{x} \rVert]$, is strongly star quasiconvex with the same modulus $\gamma \geq 0$.
\end{proposition}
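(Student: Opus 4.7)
The plan is to reduce everything to the change of variables $u = \overline{x} + t\,\frac{y-\overline{x}}{\lVert y-\overline{x}\rVert}$, which parametrizes the ray from $\overline{x}$ through $y$ by arc length $t \in [0,\lVert y-\overline{x}\rVert]$. Under this parametrization, $h_y(t) = h(u)$, $h_y(0) = h(\overline{x})$, $h_y(\lVert y-\overline{x}\rVert) = h(y)$, and crucially $\lVert u - \overline{x}\rVert = t$. The one-dimensional strong star quasiconvexity of $h_y$ with respect to the point $0$ reads
\begin{equation*}
h_y((1-\lambda)t) \;\leq\; h_y(t) - \lambda(1-\lambda)\frac{\gamma}{2} t^2, \qquad \lambda \in [0,1],
\end{equation*}
and note that $0 \in \mathrm{argmin}\, h_y$ is automatic because $\overline{x}$ minimizes $h$ globally, so $h_y(t) \geq h(\overline{x}) = h_y(0)$.

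For the forward implication, I would fix $y \neq \overline{x}$ and $t \in [0,\lVert y-\overline{x}\rVert]$, set $u := \overline{x} + t\,\frac{y-\overline{x}}{\lVert y-\overline{x}\rVert}$, and apply the strong star quasiconvexity hypothesis \eqref{ss:qcx} to the pair $(\overline{x},u)$, observing that $\lambda\overline{x} + (1-\lambda)u = \overline{x} + (1-\lambda)t\,\frac{y-\overline{x}}{\lVert y-\overline{x}\rVert}$. Translating both sides back through $h_y$ and using $\lVert u-\overline{x}\rVert^2 = t^2$ yields exactly the one-dimensional inequality above, establishing that $h_y$ is strongly star quasiconvex with the same modulus $\gamma$.

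For the converse, I would take any $y \in \mathrm{dom}\, h$; the case $y = \overline{x}$ is trivial, so assume $y \neq \overline{x}$ and apply the one-dimensional strong star quasiconvexity of $h_y$ at the endpoint $t = \lVert y-\overline{x}\rVert$. Then $h_y((1-\lambda)\lVert y-\overline{x}\rVert)$ equals $h(\lambda\overline{x} + (1-\lambda)y)$, while $h_y(\lVert y-\overline{x}\rVert) = h(y)$, so the inequality collapses to \eqref{ss:qcx}.

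There is no serious obstacle here: the proof is a bookkeeping exercise in unwinding the parametrization, and the only point to check carefully is that $0$ lies in $\mathrm{argmin}\, h_y$ so that the one-dimensional definition applies with minimizer $0$. The same argument covers both the $\gamma>0$ and $\gamma=0$ cases uniformly, since the multiplier on $\lVert y-\overline{x}\rVert^2$ is preserved exactly by the change of variables.
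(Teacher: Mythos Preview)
Your proposal is correct and follows essentially the same approach as the paper: both directions reduce to the change of variables $u=\overline{x}+t\,\frac{y-\overline{x}}{\lVert y-\overline{x}\rVert}$, apply the defining inequality \eqref{ss:qcx} to the pair $(\overline{x},u)$ in the forward direction, and specialize the one-dimensional inequality at $t=\lVert y-\overline{x}\rVert$ for the converse. Your observation that one only needs $0\in\mathrm{argmin}\,h_y$ (rather than equality of the argmin with $\{0\}$) is in fact slightly more careful than the paper's phrasing in the forward direction, since uniqueness of the minimizer is only guaranteed when $\gamma>0$.
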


\begin{proof}
 $(\Rightarrow)$: Let $\overline{x} \in \mathrm{argmin}_{\mathbb{R}^{n}}\,h$ and $y \in \mathbb{R}^{n} \backslash \{ \overline{x}\}$.   Note that $\mathrm{argmin}_{\mathbb{R}}\,h_{y} = \{0\}$. Hence, for $t_{1} = 0$ and $t_{2} \in [0, \lVert y - \overline{x} \rVert]$, we have
\begin{align*}
 h_{y} (\lambda0 + (1-\lambda) t_{2})  &  = h \left(  \overline{x} + (\lambda 0 + (1-\lambda) t_{2}) \frac{y - \overline{x}}{\lVert y - \overline{x} \Vert} \right) \\
 &  = h \left(  \lambda \left(  \overline{x} + 0 \right)  + (1 - \lambda) \left( \overline{x} + \frac{ t_{2}}{\lVert y - \overline{x} \Vert} (y - \overline{x}) \right)  \right) \\
 &  \leq h \left( \overline{x} + \frac{ t_{2}}{\lVert y - \overline{x} \Vert} (y - \overline{x}) \right)  - \lambda(1-\lambda) \frac{\gamma}{2} \left \lVert \frac{ t_{2}}{\lVert y - \overline{x} \Vert_{2}} (y - \overline{x}) \right 
 \rVert^{2} \\
 &  = h_{y} (t_{2}) - \lambda(1-\lambda) \frac{\gamma}{2} (t_{2} - 0)^{2},
\end{align*}
i.e., $h_{y}$ is strongly star quasiconvex on $[0, \lVert y - \overline{x} \rVert]$ with modulus $\gamma \geq 0$.

$(\Leftarrow)$: Assume that for $\overline{x} \in \mathrm{argmin}_{\mathbb{R}^{n}}\,h$ and every $y \in \mathbb{R}^{n} \backslash \{ \overline{x} \}$, $h_{y}$ is strongly star quasiconvex with modulus  $\gamma \geq 0$ on $[0, \lVert y - \overline{x} \rVert]$. Note that $0 \in {\rm argmin}_{ \mathbb{R}^{n} }\,h_{y}$. For every $t \in [0,1]$, we write $t \lVert y - \overline{x} \rVert = (1-t)0 + t \lVert y - \overline{x} \rVert$ and use the strong star quasiconvexity of $h_{y}$ at the points $0$, $\lVert y - \overline{x} \rVert$, and $(1-t) 0 + t \lVert y - \overline{x} \rVert$. Then,
\begin{align*}
 & \hspace{0.4cm} h_{y} (t \lVert y - \overline{x} \rVert) \leq h_{y} (\lVert y - \overline{x} \rVert) - \frac{\gamma}{2} t (1-t) \lVert y - \overline{x} \rVert^{2}\\
 & \Longrightarrow ~ h((1-t) \overline{x} + ty) \leq h(y) - \frac{\gamma}{2} t (1-t) \lVert y - \overline{x} \rVert^{2},
\end{align*}
and the result follows.
\end{proof}

Now, we have all the ingredients for showing a function which is strongly star quasiconvex but neither quasiconvex nor quasar-convex.

\begin{example}\label{4:leaf} {\bf (4-Leaf clover example)}
Let $g_{1}: \mathbb{R}_{+} \rightarrow \mathbb{R}$ be defined  by
\[
 g_{1} (x) = \left \{
 \begin{array}[c]{ll}
  3ex^{2}-2ex^{3},~~ & {\rm if} ~ 0\leq x<1,\\
  e^{n} (1 + (e-1) (3(x-n)^{2} - 2(x-n)^{3})), ~~ & {\rm if} ~ n \leq x < n+1, \, \forall \, 
  n \in \mathbb{N}.
 \end{array}
\right.
\]
Then, we define the function $h: \mathbb{R} \rightarrow \mathbb{R}$ by 
$
 h(x) = f(x) + g(x),
$
where $f, g: \mathbb{R} \rightarrow \mathbb{R}$ are given by $f(x)=x^{2}$ 
and
\[
g(x)=\left \{
\begin{array}[c]{ll}%
 g_{1}(-x),~~ & \mathrm{if}~x<0,\\
 g_{1}(x), & \mathrm{if}~x\geq0,
\end{array}
\right.
\]
which is strongly quasiconvex with modulus $\gamma = 2$, but it is not quasar-convex as proved in \cite[Exam\-ple 9]{HLMV}). Furthermore, note that $\min_{\mathbb{R}^{n}}\,h = h(0) =0$. 

Set $\alpha := h(1)$ and $\beta := h\left(  \frac{1}{\sqrt{2}}\right) < \alpha$ (because $h$ is in\-crea\-sing on $\mathbb{R}_{+}$), and let $\lVert \cdot \rVert_{p}$ be the $p$-pseudonorm $\lVert (x_{1}, x_{2}) \rVert_{p} := ( \lvert x_{1} \rvert^{p} + \lvert x_{2} \rvert^{p} )^{\frac{1}{p}}$ where $p := \frac{\ln 2}{\ln(\frac{2 \alpha}{\beta})} < 1$ has been chosen so that $\lVert \left( \frac{1}{2}, \frac{1}{2}\right) \rVert_{p} = \frac{\alpha}{\beta}$. 

Define $\phi: \mathbb{R}^{2}\rightarrow \mathbb{R}$ by (see Figure \ref{fig:exist})
$$
 \phi(x) =\left \{
\begin{array}[c]{ll}
 \frac{\lVert x \rVert}{\lVert x \rVert_{p}} h(\lVert x \rVert), ~~ & \mathrm{if} ~ x \neq 0,\\
 0, & \mathrm{if} ~ x = 0.
\end{array}
\right.
$$
Here, $\mathrm{argmin}_{\mathbb{R}^{n}}\, \phi=\{0\}$ and, for every $y \in \mathbb{R}^{2} \backslash \{0\}$, we have 
\[
\phi_{y}(t) := \phi \left( t \frac{y}{\lVert y \rVert}\right)  =\frac{\lVert y 
\rVert}{\lVert y \rVert_{p}} h(t).
\]
So, $\phi_{y}$ is strongly star quasiconvex with modulus $\frac{\lVert y \rVert
}{\lVert y \rVert_{p}} \gamma = \frac{2 \lVert y \rVert}{\lVert y \rVert_{p}}$. Since $\lVert \cdot \rVert_{p}$ and $\lVert \cdot \rVert$ are equivalent, there exists $m>0$ such that $\frac{\lVert y \rVert}{\lVert y \rVert_{p}} \geq m$ for all $y \neq 0$. Hence, $\phi$ is 
strongly star quasiconvex with modulus $2 m > 0$ by Proposition \ref{char:onedim}.

On the other hand, the restriction of $\phi$ on the $x$ axis is not quasar-convex, i.e., $\phi$ is not quasar-convex (see \cite{HLMV}). Furthermore, $\phi$ is not even quasiconvex. Indeed, let us consider the points $x = (1, 0)$, $y = (0, 1)$ and $z = \frac{1}{2}x + \frac{1}{2}y$. Then, $\phi(x) = \phi(y) = 1 h(1) = \alpha$, but $\phi(z) = \frac{\left \Vert \left(  \frac{1}{2}, \frac{1}{2} \right)  \right \Vert_{p}}{\left \Vert \left(  \frac{1}{2}, \frac{1}{2} \right) \right \Vert} h \left( \left \Vert \left(  \frac{1}{2}, \frac{1}{2} \right) \right \Vert_{p} \right) = \frac{\frac{\alpha}{\beta}}{\frac{1}{\sqrt{2}}} \beta > \alpha$, i.e., $\phi$ is not quasiconvex.

Therefore, $\phi$ is strongly star quasiconvex with modulus $2 m > 0$, without being either quasiconvex or quasar-convex.
\begin{figure}[htbp]
\centering
\includegraphics[scale=0.47]{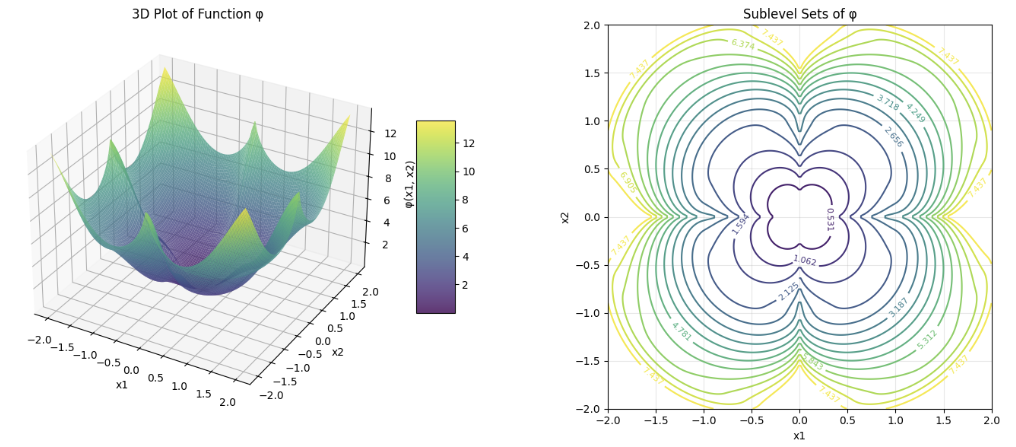}
\caption{An illustration of the function $\phi$ in Example \ref{4:leaf} (left) and its 4-leaf clover sublevel sets (right).} \label{fig:exist}
\end{figure}
\end{example}

\subsection{Star Quasiconvexity}

We begin this subsection with our first main result, which provides a geometric characterization for star quasiconvex functions.

\begin{theorem}\label{geo:charac} {\bf (Star-shaped sublevel sets)}
 Let $h: \mathbb{R}^{n} \rightarrow \overline{ \mathbb{R}}$ be a proper function. Then, $h$ is star quasiconvex with respect to $\overline{x} \in {\rm argmin}_{\mathbb{R}^{n}}\,h$ if and only if $S_{\delta} (h)$ is star-shaped at $\overline{x}$ for all $\delta \in \mathbb{R}$.
\end{theorem}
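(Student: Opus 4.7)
The plan is to prove both directions directly from the definitions, using the fact that $\overline{x}$ is a global minimizer so $\overline{x}$ lies in every nonempty sublevel set $S_{\delta}(h)$.

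For the forward direction, I would fix $\delta \in \mathbb{R}$ and assume $S_{\delta}(h) \neq \emptyset$ (otherwise star-shapedness is vacuous by convention). Since $\overline{x}$ minimizes $h$, any $y \in S_{\delta}(h)$ satisfies $h(\overline{x}) \leq h(y) \leq \delta$, so $\overline{x} \in S_{\delta}(h)$. Then for every $y \in S_{\delta}(h)$ and every $\lambda \in [0,1]$, applying Definition \ref{ss:quasiconvex} with $\gamma = 0$ gives
\begin{equation*}
 h(\lambda \overline{x} + (1-\lambda) y) \leq h(y) \leq \delta,
\end{equation*}
so $\lambda \overline{x} + (1-\lambda)y \in S_{\delta}(h)$, i.e., the segment $[\overline{x}, y] \subseteq S_{\delta}(h)$. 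This is exactly star-shapedness at $\overline{x}$.

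For the converse, I would take an arbitrary $y \in {\rm dom}\,h$ and any $\lambda \in [0,1]$, and set $\delta := h(y) \in \mathbb{R}$. Then trivially $y \in S_{\delta}(h)$, and since $\overline{x}$ is a global minimizer we also have $\overline{x} \in S_{\delta}(h)$, so $S_{\delta}(h)$ is a nonempty sublevel set containing $\overline{x}$. By the assumed star-shapedness at $\overline{x}$, the segment $[\overline{x}, y]$ is contained in $S_{\delta}(h)$, and in particular
\begin{equation*}
 h(\lambda \overline{x} + (1-\lambda) y) \leq \delta = h(y),
\end{equation*}
which is inequality \eqref{ss:qcx} with $\gamma = 0$. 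Hence $h$ is star quasiconvex with respect to $\overline{x}$.

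There is essentially no hard step: the argument is the natural analogue of the classical equivalence between quasiconvexity and convexity of sublevel sets, and the role of $\overline{x}$ being a minimizer is precisely what ensures it lies in every nonempty sublevel set, making star-shapedness with respect to $\overline{x}$ equivalent to the one-point-fixed inequality. The only point requiring a brief comment is how to handle levels $\delta < \min h$, where $S_{\delta}(h) = \emptyset$; this is resolved by adopting the usual convention that the empty set is star-shaped at any reference point, or equivalently by noting that star-shapedness is vacuously satisfied whenever the sublevel set is empty.
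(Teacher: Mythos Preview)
Your proof is correct and follows essentially the same route as the paper's: both directions are the natural analogue of the quasiconvex/convex-sublevel-set equivalence, using $\delta := h(y)$ in the converse.

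One minor point of difference worth noting: in the converse direction, the paper does not take $\overline{x} \in {\rm argmin}_{\mathbb{R}^{n}}\,h$ as given, but instead \emph{derives} it from the star-shapedness hypothesis (arguing that if $h(z) < h(\overline{x})$ for some $z$, then star-shapedness of $S_{h(z)}(h)$ at $\overline{x}$ forces $\overline{x} \in S_{h(z)}(h)$, a contradiction). You instead treat the minimizer condition as part of the standing hypothesis, which is a perfectly natural reading of the theorem as stated; the paper's version is slightly stronger in that it shows the minimizer condition is automatic from the sublevel-set assumption. Either reading yields a correct proof of the stated equivalence.
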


\begin{proof}
$(\Rightarrow)$ Let $y \in S_{\delta} (h)$. Then, for every $\lambda \in [0,1]$, we define $y_\lambda := \lambda \overline{x} + (1-\lambda) y$. Since $h$ is star quasiconvex, $h(y_\lambda) \le h(y) \le \delta$. Hence, $y_\lambda \in S_{\delta} (h)$, i.e., $[\overline{x}, y] \subset S_{\delta} (h)$. Then, $S_{\delta} (h)$ is star-shaped at $\overline{x} \in {\rm argmin}_{\mathbb{R}^{n}}\,h$.

$(\Leftarrow)$ Assume that $S_{\delta} (h)$ is star-shaped at $\overline{x}$ for all $\delta \in \mathbb{R}$.

First, we show that $\overline{x} \in {\rm argmin}_{\mathbb{R}^n} h$. Suppose for contradiction that there exists $z \in \mathbb{R}^n$ such that $h(z) < h(\overline{x})$. Let $\delta := h(z)$. Then, $z \in S_{\delta} (h)$, and by star-shapedness at $\overline{x}$, the segment $[\overline{x}, z] \subset S_{\delta} (h)$. In particular, $\overline{x} \in S_{\delta} (h)$, so $h(\overline{x}) \le \delta = h(z)$. This contradicts $h(z) < h(\overline{x})$. Hence, $h(\overline{x}) \le h(z)$ for all $z \in \mathbb{R}^n$, so $\overline{x} \in {\rm argmin}_{\mathbb{R}^n}\,h$.

Second, take any $y \in {\rm dom}\,h$ and let $\delta := h(y)$. Then, $y \in S_{\delta}(h)$. By star-shapedness at $\overline{x}$, for every $\lambda \in [0,1]$, the point $y_\lambda := \lambda \overline{x} + (1-\lambda)y \in S_{\delta} (h)$, i.e., $h(y_\lambda) \le \delta = h(y)$. Therefore, $h$ is star quasiconvex with respect to $\overline{x}$.
\end{proof}
%

\begin{proposition} {\rm {\bf (Star-shaped epigraph for star-convexity)}} \label{starshaped:epi}
 Let $h: \mathbb{R}^{n} \rightarrow \overline{ \mathbb{R}}$ be a proper function and $\overline{x} \in {\rm argmin}_{\mathbb{R}^{n} }\,h$. Then, $h$ is star-convex with respect to $\overline{x}$ if and only if ${\rm epi}\,h$ is star-shaped with respect to $(\overline{x}, h(\overline{x}))$.
\end{proposition}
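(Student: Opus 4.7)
The plan is to mirror the classical epigraph characterization of convexity, adapting it to the ``one-sided'' star setting where one endpoint is fixed at $(\overline{x}, h(\overline{x}))$. The key point is that the defining inequality of star-convexity is precisely the statement that the segment in $\mathbb{R}^{n+1}$ joining $(\overline{x}, h(\overline{x}))$ to an epigraph point $(y, h(y))$ stays in $\mathrm{epi}\,h$; so the proof is essentially a translation between these two forms.

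For the direct implication, I assume $h$ is star-convex with respect to $\overline{x}$ and pick an arbitrary $(y, t) \in \mathrm{epi}\,h$, i.e. $y \in \mathrm{dom}\,h$ with $h(y) \leq t$. For $\lambda \in [0,1]$ I form the segment point
\[
 \lambda (\overline{x}, h(\overline{x})) + (1-\lambda)(y, t) = \bigl(\lambda \overline{x} + (1-\lambda) y, \, \lambda h(\overline{x}) + (1-\lambda) t\bigr).
\]
Applying Definition \ref{def:starconvex} with $\gamma = 0$ and using $h(y) \leq t$ yields
\[
 h\bigl(\lambda \overline{x} + (1-\lambda) y\bigr) \leq \lambda h(\overline{x}) + (1-\lambda) h(y) \leq \lambda h(\overline{x}) + (1-\lambda) t,
\]
so the segment point lies in $\mathrm{epi}\,h$, proving star-shapedness at $(\overline{x}, h(\overline{x}))$.

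For the converse, I assume $\mathrm{epi}\,h$ is star-shaped at $(\overline{x}, h(\overline{x}))$ and pick any $y \in \mathrm{dom}\,h$. Then $(y, h(y)) \in \mathrm{epi}\,h$, and by hypothesis the segment $[(\overline{x}, h(\overline{x})), (y, h(y))]$ lies inside $\mathrm{epi}\,h$. For every $\lambda \in [0,1]$, reading off the last coordinate of this segment point gives
\[
 h\bigl(\lambda \overline{x} + (1-\lambda) y\bigr) \leq \lambda h(\overline{x}) + (1-\lambda) h(y),
\]
which is star-convexity with respect to $\overline{x}$. Note that star-shapedness at $(\overline{x}, h(\overline{x}))$ implicitly requires $(\overline{x}, h(\overline{x})) \in \mathrm{epi}\,h$, so this ambient anchor point is automatically legitimate.

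I do not anticipate a real obstacle: the equivalence is purely formal once one notices that the segment in $\mathbb{R}^{n+1}$ encodes the star-convex inequality. The only tiny subtlety is being careful that taking $t = h(y)$ in the epigraph (rather than $t \geq h(y)$) is enough for the reverse direction, which it is, since that already forces the inequality for arbitrary $\lambda$; the properness of $h$ ensures $h(\overline{x}) \in \mathbb{R}$ (as $\overline{x} \in {\rm argmin}_{\mathbb{R}^n}\,h$), so the anchor point is a genuine element of $\mathbb{R}^{n+1}$.
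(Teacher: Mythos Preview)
Your proof is correct and follows essentially the same route as the paper's: both directions are handled by the direct translation between the star-convex inequality and the containment of the segment $[(\overline{x}, h(\overline{x})), (y, t)]$ in $\mathrm{epi}\,h$, with the converse using the particular choice $t = h(y)$. The only differences are cosmetic (variable names and a few extra remarks you make about properness and the anchor point).
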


\begin{proof}
($\Rightarrow$) Let $(y, s) \in {\rm epi}\,h$, i.e., $s \ge h(y)$. Since $h$ is star-convex with respect to $\overline{x}$, we have for every $\lambda \in [0,1]$ that
\[
 \lambda h(\overline{x}) + (1-\lambda) s \ge \lambda h(\overline{x}) + (1-\lambda) h(y) \ge h(\lambda \overline{x} + (1-\lambda) y).
\]
Thus, $(\lambda \overline{x} + (1-\lambda) y, \lambda h(\overline{x}) + (1-\lambda) s) \in {\rm epi}\,h$, i.e., ${\rm epi}\,h$ is star-shaped with respect to $(\overline{x}, h(\overline{x}))$.

($\Leftarrow$) Assume that ${\rm epi}\,h$ is star-shaped with respect to $(\overline{x}, h(\overline{x}))$. Then, for any $y \in \mathbb{R}^n$, $(y, h(y)) \in {\rm epi}\,h$. By the star-shapedness, we have $(\lambda \overline{x} + (1-\lambda) y, \lambda h(\overline{x}) + (1-\lambda) h(y)) \in {\rm epi}\,h$ for every $\lambda \in [0,1]$, that is,
\[
\lambda h(\overline{x}) + (1-\lambda) h(y) \ge h(\lambda \overline{x} + (1-\lambda) y).
\]
Thus, $h$ is star-convex with respect to $\overline{x}$.
\end{proof}

\begin{remark}
 \begin{itemize}
  \item[$(a)$] From Proposition \ref{inclusion} and Theorem \ref{geo:charac}, we obtain that the sublevel sets of quasar-convex and star-convex functions are star-shaped at it minimizer. 

  \item[$(b)$] Note that in Proposition \ref{starshaped:epi}, we are assuming that $\overline{x} \in {\rm argmin}_{\mathbb{R}^{n}}\,h$. Indeed, the function $h(x) = x^{2}$ has a convex epigraph, i.e., a star-shaped epigraph at $(1, 1)$, but $x_{0}=1$ is not its minimum.
 \end{itemize} 
\end{remark} 

Theorem \ref{geo:charac} suggests that star quasiconvex functions are quasiconvex on rays starting at the minimizer. Indeed, following \cite{HL-1}, we write (strong) quasiconvexity in the following equivalent manner: For every $x, y \in {\rm dom}\,h$ and $z \in [x,y]$, we have ($\gamma \geq 0$)
\begin{equation} \label{r:new-form}
 h(x) \leq h(y) \, \Longrightarrow \, h(y) \geq h(z) + \frac{\gamma}{2} \lVert z-x \rVert \lVert y-z \rVert.
\end{equation}

\begin{theorem}\label{main:rel}  {\bf (Quasiconvexity along rays)}
 A function $h$ is (strongly) star quasiconvex with modulus $\gamma \geq 0$  with respect to $\overline{x} \in {\rm argmin}_{\mathbb{R}^{n}}\,h$ if and only if its restriction on any half line through $\overline{x}$ is (strongly) qua\-siconvex with the same modulus $\gamma \geq 0$.
\end{theorem}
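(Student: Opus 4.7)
The plan is to set up a change of variables that converts strong star quasiconvexity with respect to $\overline{x}$ (in which one endpoint is always the minimizer) into strong quasiconvexity between two arbitrary points on a half-line $\{\overline{x} + tu : t \geq 0\}$, and conversely.

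For the easier direction, quasiconvexity along every ray implies star quasiconvexity. I would fix $y \in \mathrm{dom}\,h$ with $y \ne \overline{x}$, set $u = (y-\overline{x})/\lVert y-\overline{x}\rVert$ and $t_y = \lVert y-\overline{x}\rVert$, and observe that $\lambda\overline{x} + (1-\lambda)y = \overline{x} + (1-\lambda)t_y\, u$. Applying strong quasiconvexity of $h_u(t) := h(\overline{x}+tu)$ on $[0,\infty)$ to the two parameters $0$ and $t_y$, and using that $h(\overline{x}) \leq h(y)$ since $\overline{x}$ is a minimizer, produces exactly the inequality \eqref{ss:qcx}.

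For the converse, I would fix a unit vector $u$ and show that $h_u$ is strongly quasiconvex on $[0,\infty)$. The key preliminary step is to observe that star quasiconvexity forces $h_u$ to be nondecreasing in value along the ray: for $0 \leq t_a \leq t_b$, writing $\overline{x}+t_a u = (1-t_a/t_b)\overline{x} + (t_a/t_b)(\overline{x}+t_b u)$ and invoking \eqref{ss:qcx} with $y = \overline{x}+t_b u$ yields $h_u(t_a) \leq h_u(t_b)$. Consequently, for any intermediate parameter $t_c = (1-s)t_a + s t_b$, the maximum $\max\{h_u(t_a), h_u(t_b)\}$ in the definition of strong quasiconvexity collapses to $h_u(t_b)$. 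One further application of \eqref{ss:qcx}, now on the segment from $\overline{x}$ to $\overline{x}+t_b u$ with coefficient $\lambda' = (t_b-t_c)/t_b$ chosen so that the combination lands at $\overline{x}+t_c u$, then gives $h_u(t_c) \leq h_u(t_b) - \tfrac{\gamma}{2}(t_b-t_c)t_c$.

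The main obstacle, and the only nontrivial arithmetic, will be to verify the elementary inequality $(t_b-t_c)t_c \geq s(1-s)(t_b-t_a)^2$, which is exactly what is needed to match the quadratic penalty in the strong quasiconvex inequality for $h_u$ at the points $t_a, t_b$. After the substitutions $t_b-t_c = (1-s)(t_b-t_a)$ and $t_c - t_a = s(t_b-t_a)$, this reduces to $(1-s)t_a + s t_b \geq s(t_b-t_a)$, i.e., to $t_a \geq 0$, so the slack produced by using the longer segment $[\overline{x}, \overline{x}+t_b u]$ in place of the sub-segment between the two ray points is exactly enough. This same slack is the reason the forward direction preserves the modulus $\gamma$ rather than halving it, as happens in some other translations between quasiconvex characterizations.
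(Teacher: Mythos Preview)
Your proposal is correct and follows essentially the same route as the paper: for the nontrivial direction you apply the star quasiconvex inequality on the longer segment $[\overline{x},\,\overline{x}+t_b u]$ and then use that the intermediate point is farther from $\overline{x}$ than from $t_a$, which is exactly the inequality $t_a\geq 0$ you isolate. The paper's write-up is slightly leaner---it works directly with the product form \eqref{r:new-form} and skips the preliminary monotonicity step, since $h_u(t_b)\leq\max\{h_u(t_a),h_u(t_b)\}$ holds trivially---but the substance is identical.
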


\begin{proof}
 $(\Leftarrow)$: It follows directly by definition.

 $(\Rightarrow)$: Take any $x, x^{\prime} \in \varepsilon (\overline{x}) := \{\overline{x} + ty: \, t \in \mathbb{R}_{+}\}$ with $y \in {\rm dom}\,h$. We assume without loss of generality that $x \in [\overline{x}, x^{\prime}]$. Take any $t \in [0, 1]$ and set $z := (1-t) x + tx^{\prime}$. Then, $z \in [\overline{x}, x^{\prime}]$. So, applying \eqref{r:new-form}, we have
 \begin{align*}
  h(z) & \leq h(x^{\prime}) - \frac{\gamma}{2} \left \Vert x^{\prime} - z \right \Vert \left \Vert z - \overline{x} \right \Vert \\
  & \leq h(x^{\prime}) - \frac{\gamma}{2} \lVert x^{\prime} - z \rVert \lVert z-x \rVert \\
  & = h(x^{\prime})  - \frac{\gamma}{2} t (1-t) \lVert x^{\prime} - x \rVert^{2}.
 \end{align*}
Hence, $h$ is (strongly) quasiconvex with modulus $\gamma \geq 0$ on $\varepsilon (\overline{x})$ .
\end{proof}

\begin{remark}
 During the preparation of this work, we were pointed out about reference \cite{NTV}, in which the authors studied (strongly) star quasiconvex functions under a different name. However, in virtue of the reasons explained in the introduction and Theorems \ref{geo:charac} and \ref{main:rel}, we propose to keep the {\rm (strongly) star quasiconvex} name for this new notion.

 Furthermore, note that in \cite{NTV} the authors developed the differentiable case and they applied it to constraint optimization problems, while in our contribution, we focus the attention mostly on the nonsmooth case.
\end{remark}

As a consequence of Theorem \ref{main:rel}, we have.

\begin{corollary}\label{nondecreasing:alongrays} {\bf (Nondecreasing along rays)}
Let $K \subseteq \mathbb{R}^{n}$ be a closed set and $h: \mathbb{R}^{n} \rightarrow \overline{\mathbb{R} }$ be a proper function such that $K \subseteq {\rm dom}\,h$. Suppose that $\overline{x} \in {\rm argmin}_{K}\,h$ and $K$ is star-shaped at $\overline{x}$. If $h$ is (strongly) star quasiconvex on $K$ with modulus $\gamma \geq 0$ with respect to $\overline{x}$, then $h$ is nondecreasing along rays through $\overline{x}$ on $K$.
    
Conversely, if $h$ is nondecreasing along rays through $\overline{x}$ on $K$, then $h$ is star quasiconvex with respect to $\overline{x}$ (with $\gamma = 0$).
\end{corollary}

\begin{proof}
Suppose that $h$ is star quasiconvex with respect to $\overline{x}$. Take any ray $\varepsilon (\overline{x}) = \{\overline{x} + t u : t \geq 0\} \cap K$ with $u \in \mathbb{R}^n$. For any $0 \leq t_1 < t_2$, let $\lambda = 1 - \frac{t_1}{t_2} \in [0,1]$. Then, $\overline{x} + t_1 u = \lambda \overline{x} + (1-\lambda)(\overline{x} + t_2 u)$. By star quasiconvexity,
\[
 h(\overline{x} + t_1 u) \leq h(\overline{x} + t_2 u) - \lambda(1-\lambda)\frac{\gamma}{2}\|t_2 u\|^2 \leq h(\overline{x} + t_2 u),
\]
Thus, $h$ is nondecreasing along rays.

Conversely, if $h$ is nondecreasing along rays through $\overline{x}$ on $K$, then for any $y \in K$ and $\lambda \in [0,1]$, the point $\lambda \overline{x} + (1-\lambda)y$ lies on the ray from $\overline{x}$ to $y$. Since $h$ is nondecreasing along this ray, we have
$
h(\lambda \overline{x} + (1-\lambda)y) \leq h(y),
$
i.e., $h$ is star quasiconvex with respect to $\overline{x}$ (with $\gamma = 0$).
\end{proof}

As a consequence, in the case of star quasiconvexity ($\gamma = 0$), we have the following characterization.

\begin{corollary}\label{3;char}
 Let $K \subseteq \mathbb{R}^{n}$ be a closed set and $h: \mathbb{R}^{n} \rightarrow \overline{\mathbb{R}}$ be a proper func\-tion such that $K \subseteq {\rm dom}\,h$. Suppose that $\overline{x} \in {\rm argmin}_{K}\,h$ and $K$ is star-shaped at $\overline{x}$. Then, the following assertions are equivalent:
 \begin{itemize}
  \item[$(a)$] $h$ is star quasiconvex on $K$ with respect to $\overline{x}$;

  \item[$(b)$] $h$ is quasiconvex along rays through $\overline{x}$ on $K$;

  \item[$(c)$] $h$ is nondecreasing along rays through $\overline{x}$ on $K$.
 \end{itemize}
\end{corollary}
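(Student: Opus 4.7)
The statement is a consolidation of the two preceding results, so the plan is essentially to reduce the three-way equivalence to a chain of already-proved implications rather than redo the one-dimensional bookkeeping. Concretely, Theorem \ref{main:rel} specialized to $\gamma = 0$ yields $(a) \Leftrightarrow (b)$, while Corollary \ref{nondecreasing:alongrays} (both directions, specialized to $\gamma = 0$) yields $(a) \Leftrightarrow (c)$. Transitivity then delivers the full equivalence $(a) \Leftrightarrow (b) \Leftrightarrow (c)$.

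The one point that requires a small verification is that the two quoted results are stated on $\mathbb{R}^{n}$ whereas here we work on the star-shaped set $K$. I would handle this up front by noting that, since $K$ is star-shaped at $\overline{x}$, every segment $[\overline{x},y]$ with $y \in K$ lies inside $K$, and hence a ray $\{\overline{x}+tu : t \geq 0\}$ intersected with $K$ is a connected interval starting at $\overline{x}$. All the inequalities appearing in star quasiconvexity, quasiconvexity along a ray through $\overline{x}$, and monotonicity along a ray through $\overline{x}$ only involve such segments, so the arguments in Theorem \ref{main:rel} and Corollary \ref{nondecreasing:alongrays} transfer verbatim with $\mathbb{R}^{n}$ replaced by $K$. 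After this remark, the three implications are cited directly.

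If one prefers a self-contained cyclic argument $(a) \Rightarrow (b) \Rightarrow (c) \Rightarrow (a)$, only the middle step requires any computation: for $0 \leq t_{1} \leq t_{2}$ and $u \in \mathbb{R}^{n}$ with $\overline{x}+t_{2}u \in K$, the point $\overline{x}+t_{1}u$ lies on the segment $[\overline{x},\overline{x}+t_{2}u]$; applying the one-dimensional quasiconvexity granted by $(b)$ together with $h(\overline{x}) \leq h(\overline{x}+t_{2}u)$ (which holds because $\overline{x} \in \mathrm{argmin}_{K}\,h$) gives $h(\overline{x}+t_{1}u) \leq \max\{h(\overline{x}),h(\overline{x}+t_{2}u)\} = h(\overline{x}+t_{2}u)$, which is $(c)$. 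The remaining implications $(a)\Rightarrow(b)$ and $(c)\Rightarrow(a)$ are then quoted from Theorem \ref{main:rel} and from the converse part of Corollary \ref{nondecreasing:alongrays}, respectively.

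There is no real obstacle: the main (and essentially only) thing to check is that the star-shapedness of $K$ at $\overline{x}$ makes the previously developed machinery applicable without any modification. Once that is observed, the proof is a few lines of citation and a short monotonicity argument.
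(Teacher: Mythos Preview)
Your proposal is correct and matches the paper's approach: the paper presents this corollary without proof, positioning it as an immediate consequence of Theorem~\ref{main:rel} (giving $(a)\Leftrightarrow(b)$) and Corollary~\ref{nondecreasing:alongrays} (giving $(a)\Leftrightarrow(c)$), which is precisely the chain of citations you propose. Your additional remark about transferring the arguments from $\mathbb{R}^{n}$ to the star-shaped set $K$ is a reasonable clarification that the paper leaves implicit.
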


Another consequence of Theorem \ref{main:rel} is given below.

\begin{corollary}\label{coro:Charact}  {\bf (Stronger property)}
 Let $K \subseteq \mathbb{R}^{n}$ be a closed set and $h: \mathbb{R}^{n} \rightarrow \overline{\mathbb{R}}$ be a proper func\-tion such that $K \subseteq {\rm dom}\,h$. Suppose that $\overline{x} \in {\rm argmin}_{K}\,h$ and $K$ is star-shaped at $\overline{x}$. If $h$ is (strongly) star quasiconvex on $K$ with modulus $\gamma \geq 0$ with respect to $\overline{x}$, then for every $y \in K$ and every $z = \overline{x} + t(y-\overline{x})$, with $0< t \leq1$, the following holds:
 \begin{equation}\label{no-integral}
  h(z) \leq h(y) - \frac{\gamma}{4} (1-t^{2}) \lVert y - \overline{x} \rVert^{2} \left( = h(y) - \frac{\gamma}{4} (\lVert y - \overline{x} \rVert^{2} - \lVert z - \overline{x} \rVert^{2})  \right). 
 \end{equation}

 \noindent Conversely, if $h$ is continuous and \eqref{no-integral} holds for every $y \in K$, then $h$ is (strongly) star quasiconvex on $K$ with the same modulus $\gamma \geq 0$ with respect to $\overline{x}$.
\end{corollary}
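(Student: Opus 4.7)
The plan is to reduce everything to one dimension along the segment $[\overline{x}, y] \subseteq K$ (which lies in $K$ by star-shapedness at $\overline{x}$) using Theorem \ref{main:rel}, and then dispatch the forward direction by a telescoping argument that realises a Riemann sum for $\int_{t}^{1} s \, ds$, and the converse by an elementary algebraic comparison with \eqref{ss:qcx}.

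For the forward direction, fix $y \in K \setminus \{\overline{x}\}$ and set $\phi(s) := h(\overline{x} + s(y - \overline{x}))$ for $s \in [0, 1]$. By Theorem \ref{main:rel}, $\phi$ is strongly quasiconvex on $[0, 1]$ with modulus $\gamma' := \gamma \lVert y - \overline{x} \rVert^{2}$ and with unique minimum at $s = 0$. For a partition $t = s_{0} < s_{1} < \cdots < s_{n} = 1$ of $[t, 1]$, writing $s_{i-1} = (1 - s_{i-1}/s_{i}) \cdot 0 + (s_{i-1}/s_{i}) \cdot s_{i}$ and applying strong quasiconvexity of $\phi$ yields $\phi(s_{i-1}) \leq \phi(s_{i}) - \frac{\gamma'}{2} s_{i-1}(s_{i} - s_{i-1})$. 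Telescoping and sending the mesh of the partition to zero, the accumulated penalty $\sum_{i=1}^{n} s_{i-1}(s_{i} - s_{i-1})$ converges to the Riemann integral $\int_{t}^{1} s \, ds = (1 - t^{2})/2$, producing $\phi(t) \leq \phi(1) - \gamma'(1 - t^{2})/4$, which upon translation back to $h$ is exactly \eqref{no-integral}.

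For the converse, given $y \in K$ and $\lambda \in [0, 1]$, set $t := 1 - \lambda$ so that $z := \lambda \overline{x} + (1 - \lambda) y = \overline{x} + t(y - \overline{x})$. For $t \in (0, 1]$, \eqref{no-integral} combined with the elementary bound $\frac{1}{4}(1 - t^{2}) \geq \frac{1}{2} t(1 - t)$, which reduces to $1 + t \geq 2t$ on $[0, 1]$, immediately recovers the defining inequality \eqref{ss:qcx}. The boundary case $t = 0$ is trivial since $\overline{x}$ is a minimizer. Continuity of $h$ is convenient for passing to the boundary (e.g.\ for rederiving a quadratic growth statement from the same inequality) but is not strictly indispensable for obtaining \eqref{ss:qcx} itself.

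The main obstacle will be the forward direction. A single application of strong quasiconvexity to the pair $(0, 1)$ yields only the weaker coefficient $\gamma' t(1 - t)/2$, which collapses to zero at $t = 0$; in contrast, the desired coefficient $\gamma'(1 - t^{2})/4$ at $t = 0$ equals $\gamma \lVert y - \overline{x} \rVert^{2}/4$, which is precisely the quadratic-growth coefficient appearing in \eqref{qgc}. Producing this improved bound therefore requires iterating strong quasiconvexity infinitely often along the ray and recognising the telescoped deficit as a Riemann integral; no regularity of $\phi$ is required for the limit step since $s \mapsto s$ is continuous and the sums converge under any mesh refinement.
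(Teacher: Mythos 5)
Your proof is correct. The paper itself does not give an argument: it simply says ``take $x=\overline{x}$ in the proof of \cite[Theorem 4]{HL-1}'', so your write-up is a genuinely self-contained replacement. Both the forward and converse directions check out: the one-step inequality $\phi(s_{i-1})\leq\phi(s_i)-\tfrac{\gamma'}{2}s_{i-1}(s_i-s_{i-1})$ follows from applying \eqref{ss:qcx} (or Theorem \ref{main:rel}) at the pair $(0,s_i)$ with weight $s_{i-1}/s_i$, the telescoped deficit is a left Riemann sum for $\int_t^1 s\,ds=(1-t^2)/2$, and since $\phi(t)$ and $\phi(1)$ are fixed while the sum increases under refinement, passing to the supremum over partitions yields the $\tfrac{\gamma}{4}(1-t^2)$ coefficient with no regularity assumption on $\phi$ --- exactly the standard device by which the quadratic-growth constant $\gamma/4$ in \eqref{qgc} is extracted, and almost certainly the same mechanism used in the cited two-point result. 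Your converse via $\tfrac14(1-t^2)\geq\tfrac12 t(1-t)$ on $[0,1]$ is also right, and your side remark is a genuine (small) improvement on the statement: continuity is only needed in the general two-point version of \cite[Theorem 4]{HL-1}, whereas with one endpoint pinned at the minimizer the algebraic comparison alone recovers \eqref{ss:qcx}, so the continuity hypothesis in the converse of Corollary \ref{coro:Charact} is superfluous. The only stylistic quibble is that invoking Theorem \ref{main:rel} is slight overkill --- the partition inequality follows directly from Definition \ref{ss:quasiconvex} applied at the points $\overline{x}+s_i(y-\overline{x})\in K$ --- but this does not affect correctness.
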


\begin{proof}
 Just take $x = \overline{x}$ in the proof of \cite[Theorem 4]{HL-1}.
\end{proof}

From Corollary \ref{coro:Charact}, we can obtain a quadratic growth property for the unique minimizer of strongly star quasiconvex function.

\begin{proposition}\label{prop:qwc}  {\bf (Quadratic growth property)}
 Let $K \subseteq \mathbb{R}^{n}$ be a closed set and $h: \mathbb{R}^{n} \rightarrow \overline{\mathbb{R}}$ be a proper func\-tion such that $K \subseteq {\rm dom}\,h$. Suppose that $\overline{x} \in {\rm argmin}_{K}\,h$ and $K$ is star-shaped at $\overline{x}$. If $h$ is strongly star quasiconvex on $K$ with modulus 
$\gamma > 0$, then
 \begin{equation}\label{qgp:classb}
  h(\overline{x}) + \frac{\gamma}{4} \lVert y - \overline{x} \rVert^{2} \leq h(y), ~ \forall ~ y \in K,
 \end{equation}
\end{proposition}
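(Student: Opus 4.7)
The plan is to obtain the quadratic growth inequality by combining the ``stronger property" of Corollary \ref{coro:Charact} with the fact that $\overline{x}$ is a minimizer, then passing to a limit in the shrinking parameter. Since all of the hypotheses of Corollary \ref{coro:Charact} (closedness of $K$, $K\subseteq{\rm dom}\,h$, $\overline{x}\in{\rm argmin}_K h$, star-shapedness of $K$ at $\overline{x}$, and strong star quasiconvexity with modulus $\gamma>0$) are exactly the hypotheses of Proposition \ref{prop:qwc}, this corollary is available to us directly.

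First, fix any $y \in K$. Because $K$ is star-shaped at $\overline{x}$, for each $t \in (0, 1]$ the point $z_t := \overline{x} + t(y - \overline{x})$ lies in $K$, so by Corollary \ref{coro:Charact} applied to $y$ and $z_t$ we obtain
\begin{equation*}
 h(z_t) \,\leq\, h(y) - \frac{\gamma}{4}(1-t^{2})\,\lVert y - \overline{x} \rVert^{2}.
\end{equation*}
Second, since $\overline{x} \in {\rm argmin}_{K}\,h$ and $z_t \in K$, we have $h(\overline{x}) \leq h(z_t)$ for every such $t$. Chaining these two inequalities yields
\begin{equation*}
 h(\overline{x}) \,\leq\, h(y) - \frac{\gamma}{4}(1-t^{2})\,\lVert y - \overline{x} \rVert^{2}, \qquad \forall\, t \in (0,1].
\end{equation*}
Finally, letting $t \to 0^{+}$ in the right-hand side (no continuity or lower semicontinuity of $h$ is required because only the explicit polynomial in $t$ varies) gives $h(\overline{x}) + \frac{\gamma}{4}\lVert y - \overline{x} \rVert^{2} \leq h(y)$, which is \eqref{qgp:classb}.

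There is really no major obstacle here: the technical work of handling the $\frac{\gamma}{4}$ (rather than the weaker $\frac{\gamma}{8}$ one would get by plugging $\lambda = 1/2$ directly into the defining inequality \eqref{ss:qcx}) has already been absorbed into Corollary \ref{coro:Charact}. The only subtle point worth flagging is that we do \emph{not} invoke continuity of $h$ at $\overline{x}$; the limit $t \to 0^{+}$ is taken on the purely algebraic right-hand side, while the left-hand side $h(\overline{x})$ is constant and is compared to $h(z_t)$ through the minimizer property rather than through any continuity argument.
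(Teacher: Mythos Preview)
Your proof is correct and follows essentially the same approach as the paper, which simply cites \cite[Corollary 9]{HL-1}: apply Corollary \ref{coro:Charact} to obtain $h(z_t)\le h(y)-\tfrac{\gamma}{4}(1-t^2)\lVert y-\overline{x}\rVert^2$, use the minimality of $\overline{x}$ to bound $h(\overline{x})\le h(z_t)$, and let $t\to 0^+$. Your remark that no continuity of $h$ is needed (because the limit is taken only in the algebraic term $(1-t^2)$) is exactly the point.
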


\begin{proof}
 Exactly as the proof of \cite[Corollary 9]{HL-1}.
\end{proof}

The following result provides information regarding the behaviour of strongly star quasiconvex functions at infinity, a result which extends and improves \cite[Theo\-rem 1]{Lara-9} and \cite[Proposition 12]{BLL} to strong star quasiconvexity and star-shaped sets.

\begin{proposition}\label{spconvex:super} {\bf ($2$-Supercoercivity on star-shaped sets)}
 Let $K \subseteq \mathbb{R}^{n}$ be a closed set and $h: \mathbb{R}^{n} \rightarrow \overline{\mathbb{R}}$ be a proper func\-tion such that $K \subseteq {\rm dom}\,h$. Suppose that $\overline{x} \in {\rm argmin}_{K}\,h$ and $K$ is star-shaped at $\overline{x}$. If $h$ is strongly star quasiconvex function on $K$ with modulus $\gamma > 0$, then $h$ is $2$-supercoercive.
\end{proposition}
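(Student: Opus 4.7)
The plan is to derive the $2$-supercoercivity inequality \eqref{2:super} as an immediate consequence of the quadratic growth bound established in Proposition \ref{prop:qwc}. First I would invoke that proposition (which applies since $h$ is strongly star quasiconvex on $K$ with modulus $\gamma>0$ and $K$ is star-shaped at $\overline{x} \in \operatorname{argmin}_{K} h$) to obtain
$$h(y) \;\geq\; h(\overline{x}) + \frac{\gamma}{4}\,\lVert y - \overline{x} \rVert^{2} \quad \text{for every } y \in K.$$
Dividing by $\lVert y\rVert^{2}$ for $y\neq 0$ then yields, on $K \setminus\{0\}$,
$$\frac{h(y)}{\lVert y \rVert^{2}} \;\geq\; \frac{h(\overline{x})}{\lVert y \rVert^{2}} + \frac{\gamma}{4}\cdot \frac{\lVert y - \overline{x} \rVert^{2}}{\lVert y \rVert^{2}}.$$

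Next I would analyze the asymptotic behavior of the right-hand side as $\lVert y \rVert \to +\infty$ with $y\in K$. The first term tends to $0$ because $h(\overline{x})$ is a fixed finite number. For the second term, expanding
$$\frac{\lVert y-\overline{x} \rVert^{2}}{\lVert y \rVert^{2}} \;=\; 1 - \frac{2\langle y,\overline{x}\rangle}{\lVert y \rVert^{2}} + \frac{\lVert \overline{x} \rVert^{2}}{\lVert y \rVert^{2}},$$
and using Cauchy--Schwarz to bound $\lvert \langle y,\overline{x}\rangle \rvert / \lVert y \rVert^{2} \leq \lVert \overline{x} \rVert / \lVert y \rVert$, one sees this ratio converges to $1$ as $\lVert y \rVert \to +\infty$. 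Taking the liminf therefore gives
$$\liminf_{\lVert y \rVert \to +\infty,\, y\in K} \; \frac{h(y)}{\lVert y \rVert^{2}} \;\geq\; \frac{\gamma}{4} \;>\; 0,$$
which is precisely the $2$-supercoercivity condition \eqref{2:super}.

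The one subtlety, which I do not expect to be a real obstacle, is that Definition $(i)$ of $2$-supercoercivity is phrased for $h:\mathbb{R}^{n}\to\overline{\mathbb{R}}$ with the limit taken over all $\lVert x \rVert \to +\infty$, while the quadratic growth bound is only available for $y \in K$. This is handled by reading the conclusion for the restriction $h\rvert_{K}$ (equivalently, by replacing $h$ with $h+\iota_{K}$, which is proper, lsc on its effective domain $K$, and trivially yields $h(y)/\lVert y\rVert^{2}=+\infty$ off $K$), so the conclusion is unaffected; if moreover $K$ is bounded, the statement is vacuous since $K^{\infty}=\{0\}$.
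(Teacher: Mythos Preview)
Your proof is correct and follows essentially the same route as the paper: invoke the quadratic growth bound \eqref{qgp:classb} from Proposition~\ref{prop:qwc}, divide by $\lVert y\rVert^{2}$, and pass to the liminf to obtain the lower bound $\gamma/4$. The paper phrases this via an arbitrary sequence $\{x_k\}_k \subseteq K$ with $\lVert x_k\rVert \to +\infty$ (writing the ratio as $\bigl\lVert \tfrac{x_k}{\lVert x_k\rVert} - \tfrac{\overline{x}}{\lVert x_k\rVert}\bigr\rVert^{2}$), but the computation is identical; your remark on the $K$ versus $\mathbb{R}^n$ domain subtlety is a point the paper leaves implicit.
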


\begin{proof}
 Let $\{x_{k}\}_{k} \subseteq K$ be such that $\lVert x_{k} \rVert \rightarrow + \infty$. Then, we may assume that $\frac{x_{k}}{\lVert x_{k} \rVert} \rightarrow u \in K^{\infty}$ with $\lVert u \rVert= 1$. Since $\overline{x} \in {\rm argmin}_K\,h$, $h(\overline{x}) \leq h(x_{k})$ for all $k \in \mathbb{N}$. Using relation \eqref{qgp:classb}, we have the chain of implications
\begin{align*} 
 h(x^{k}) \geq h(\overline{x}) + \frac{\gamma}{4} \lVert x_{k} - \overline{x} \rVert^{2} & \Longrightarrow \frac{h(x_{k})}{\lVert x_{k} \rVert^{2}} \geq \frac{h(\overline{x})}{\lVert x_{k} \rVert^{2}} + \frac{\gamma}{4} \left \Vert \frac{x_{k}}{\lVert x_{k} \rVert} - \frac{\overline{x}}{\lVert x_{k} \rVert} \right \Vert^{2} \\
 & \Longrightarrow \liminf_{k \rightarrow+ \infty} \frac{h(x_{k})}{\lVert x_{k} \rVert^{2}} \geq \frac{\gamma}{4} > 0,
\end{align*}
 i.e., $h$ is 2-super\-coercive.
\end{proof}

Now, we focus our attention on local/global minimum properties for (strongly) star quasiconvex functions. 

As a first result, note that for strongly star quasiconvex functions every local minimizer is global.

\begin{proposition}\label{loc:global}
 Let $h: \mathbb R^n \to \overline{\mathbb{R}}$ be a proper function and $\overline{x} \in {\rm argmin}_{\mathbb{R}^{n}}\,h$. Assume that $h$ is strongly star quasiconvex. Then the following assertions hold:
 \begin{enumerate}
  \item[$(a)$] if $x$ is a local minimizer of $h$, then $x \in {\rm argmin}_{\mathbb{R}^{n}}\,h$;

  \item[$(b)$] every local maximizer of $h$ belongs to ${\rm argmin}_{\mathbb{R}^n}\,h$. In particular, \(h\) has no strict local maximizers.
 \end{enumerate}
\end{proposition}

\begin{proof}
$(a)$: Suppose for the contrary that $x_{0}$ is a local minimizer of $h$ and $x_{0} \notin {\rm argmin}_{\mathbb{R}^{n}}\,h$. Then, $h(x_{0}) > h(\overline{x})$ and $\overline{x} \neq x_{0}$. For $\lambda \in \, ]0,1]$, set $x_\lambda := \lambda \overline{x} + (1-\lambda) x_{0}$. By strong star quasiconvexity, it is evident that
\[
 h(x_\lambda) \le h(x_{0}) - \lambda (1-\lambda) \frac{\gamma}{2} \lVert \overline{x} - x_{0} \rVert^{2} < h(x_{0}).
\]
Since $x_\lambda \to x_{0}$ as $\lambda \downarrow 0$, this contradicts the local minimality of $x_{0}$. Therefore, $x_{0} \in {\rm argmin}_{\mathbb{R}^{n}}\,h$.
\\
$(b)$: Suppose that $x_{0} \notin {\rm argmin}_{\mathbb{R}^{n}}\,h$. Then, $h(x_{0}) > h(\overline x)$ and $\overline{x} \neq x_{0}$. For $\varepsilon > 0$, let us define $y_\varepsilon := \overline{x} + (1 + \varepsilon) (x_{0} - \overline x)$ and $\lambda_\varepsilon := \frac{\varepsilon}{1 + \varepsilon} \in \, ]0, 1[$. Then, $x_{0} = \lambda_\varepsilon \overline x + (1-\lambda_\varepsilon) y_\varepsilon$ and $ y_\varepsilon \to x_{0}$ as $\varepsilon \downarrow 0$. Since $h$ is strongly star quasiconvex at $\overline x$, we get
$$h(x_{0}) = h(\lambda_\varepsilon \overline x + (1-\lambda_\varepsilon) y_\varepsilon) \le h(y_\varepsilon) - \lambda (1-\lambda) \frac{\gamma}{2} \lVert \overline{x} - x_{0} \rVert^{2} < h(y_{\varepsilon}).$$ 
Hence, every neighborhood of $x_{0}$ contains a point $y_\varepsilon$ such that $h(y_\varepsilon) > h(x_{0})$, proving that $x_{0}$ is not a local maximizer. Therefore, every local maximizer of $h$ belongs to ${\rm argmin}_{\mathbb{R}^n}\,h$. 

Finally, if $x_{0}$ were a strict local maximizer, then all nearby points would satisfy $h(y) < h(x_{0})$, which contradicts the fact that $x_{0} \in {\rm argmin}_{\mathbb{R}^{n}}\,h$, i.e., $h$ accepts no strict local maximizers.
\end{proof}

\begin{remark}
 For star quasiconvex functions ($\gamma=0$), one may find local minimizers which are not global. Indeed, since quasiconvex functions are star quasiconvex, the function $h(x) = \min\{\lvert x \rvert, 1\}$ is star quasiconvex with local minimizers which are not global.
\end{remark} 

The following result is useful for constructing (strongly) star quasiconvex functions which are neither convex nor quasiconvex (see \cite{HADR, HSS} for the quasar-convex case). Note that in the next proposition, differentiability of $g$ is not needed.

\begin{proposition}\label{prop2:b}
 Let $f: \mathbb{R} \rightarrow \mathbb{R}$ be a function, $0 \in {\rm argmin}_{\mathbb{R}}\,f$, $f(0)=0$, and $g: \mathbb{R}^{n} \rightarrow \mathbb{R}$ be a function such that $g \geq 1$. Define the function
 \begin{equation}\label{exam:const}
  h(x) := f(\lVert x \rVert) g \left( \frac{x}{\lVert x \rVert} \right), ~ \forall ~ x \in \mathbb{R}^{n} \backslash \{0\},
 \end{equation}
 and $h(0) := 0$. If $f$ is (strongly) star quasiconvex with modulus $\gamma \geq 0$, then $h$ is (strongly) star quasiconvex with modulus $\gamma \geq 0$.
\end{proposition}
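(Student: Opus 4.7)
The plan is to reduce to the one-dimensional situation via the characterization in Proposition \ref{char:onedim}. First I would verify that $0 \in \operatorname{argmin}_{\mathbb{R}^{n}} h$: since $f(0) = 0$ is the minimum of $f$ and $g \geq 1$, for every $x \neq 0$ we have $h(x) = f(\lVert x\rVert) g(x/\lVert x \rVert) \geq 0 \cdot 1 = 0 = h(0)$. So $0$ is a (global) minimizer of $h$, and in particular Proposition \ref{char:onedim} is applicable at the point $\overline{x} = 0$.

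Next, fix any $y \in \mathbb{R}^{n} \setminus \{0\}$ and set $u := y/\lVert y \rVert$, so $\lVert u \rVert = 1$. Define $h_{y}(t) := h(tu)$ for $t \in [0, \lVert y\rVert]$. For $t > 0$, the radial/angular decomposition gives
\begin{equation*}
 h_{y}(t) = f(\lVert tu\rVert)\, g\!\left(\tfrac{tu}{\lVert tu \rVert}\right) = f(t)\, g(u),
\end{equation*}
and at $t = 0$ we have $h_{y}(0) = h(0) = 0 = f(0)\,g(u)$, so the identity $h_{y}(t) = g(u) f(t)$ extends to the whole interval. Thus $h_{y}$ is just the scalar multiple $g(u) \geq 1$ of $f$.

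The remaining task is to check that $h_{y}$ is strongly star quasiconvex on $[0, \lVert y\rVert]$ with modulus $\gamma$ and minimizer $0$. Given $t_{2} \in [0, \lVert y\rVert]$ and $\lambda \in [0,1]$, using strong star quasiconvexity of $f$ at its minimizer $0$,
\begin{align*}
 h_{y}\bigl(\lambda \cdot 0 + (1-\lambda) t_{2}\bigr)
 &= g(u)\, f\bigl(\lambda \cdot 0 + (1-\lambda) t_{2}\bigr) \\
 &\leq g(u)\Bigl[ f(t_{2}) - \lambda(1-\lambda)\tfrac{\gamma}{2} t_{2}^{2}\Bigr] \\
 &= h_{y}(t_{2}) - g(u)\,\lambda(1-\lambda)\tfrac{\gamma}{2} t_{2}^{2} \\
 &\leq h_{y}(t_{2}) - \lambda(1-\lambda)\tfrac{\gamma}{2}\,(t_{2}-0)^{2},
\end{align*}
where the last inequality uses $g(u) \geq 1$. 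Hence $h_{y}$ is strongly star quasiconvex with modulus $\gamma$ on $[0, \lVert y\rVert]$, and Proposition \ref{char:onedim} delivers the conclusion.

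There is no real obstacle here: the argument is essentially a clean reduction to the one-dimensional case, and the only point that requires any care is the observation that multiplication by the \emph{angular} factor $g(u) \geq 1$ \emph{amplifies} the defect term $\lambda(1-\lambda)\tfrac{\gamma}{2} t_{2}^{2}$ rather than weakening it, so the same modulus $\gamma$ is preserved (and in fact one could state a slightly sharper modulus $\gamma \inf_{u} g(u)$, but this is not needed for the claim).
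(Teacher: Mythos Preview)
Your argument is correct and is essentially the paper's own proof, just wrapped in the language of Proposition~\ref{char:onedim}: the paper verifies the defining inequality $h((1-\lambda)y)\le h(y)-\lambda(1-\lambda)\tfrac{\gamma}{2}\lVert y\rVert^{2}$ directly by the same factorization $h((1-\lambda)y)=g(y/\lVert y\rVert)\,f((1-\lambda)\lVert y\rVert)$ and the same use of $g\ge 1$, whereas you route this computation through the one-dimensional characterization. The only extra step you perform is the explicit check that $0\in\operatorname{argmin}_{\mathbb{R}^{n}}h$, which the paper leaves implicit.
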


\begin{proof}
 Let $y \in \mathbb{R}^{n}$, $\overline{x} = 0 \in {\rm argmin}_{\mathbb{R}}\,f$, and $f(\overline{x})=0$.  We have 
 $$h(\lambda \overline{x} + (1-\lambda) y) = h((1-\lambda) y) = f((1 - \lambda) \lVert y \rVert) g \left( \frac{y}{\lVert y \rVert} \right), ~ \forall 
 ~ \lambda \in [0, 1].$$
 Since $f$ is strongly star quasiconvex with modulus $\gamma \geq 0$, we have
 \begin{align*}
  f((1-\lambda) \lVert y \rVert) g \left( \frac{y}{\lVert y \rVert} \right) & 
  \leq \left( f(\lVert y \rVert) - \lambda (1-\lambda) \frac{\gamma}{2} \lVert y \rVert^{2} \right) g \left( \frac{y}{\lVert y \rVert} \right) \\
  & = h(y) - \lambda (1-\lambda) \frac{\gamma}{2} \lVert y \rVert^{2} g 
  \left( \frac{y}{\lVert y \rVert} \right).
 \end{align*}
 Since $g \geq 1$, we have
 \begin{align*}
  h((1-\lambda) y) + \lambda (1-\lambda) \frac{\gamma}{2} \lVert y \rVert^{2} 
  & \leq h(y) + \lambda (1-\lambda) \frac{\gamma}{2} \lVert y \rVert^{2} \left( 1 - g \left( \frac{y}{\lVert y \rVert} \right) \right) \\
  & \leq h(y), ~ \forall ~ \lambda \in [0, 1],
 \end{align*}
 i.e., $h$ is strongly star quasiconvex with modulus $\gamma \geq 0$.
\end{proof}

The following example shows the nonconvex behaviour of a strongly star quasiconvex function of type \eqref{exam:const}.

\begin{example}\label{exam:02} 
 Let $\alpha \in \, ]0, 1[$, $k \in \mathbb{N}$, $t,x_1,x_2\in\mathbb R$,
 \begin{align*}
  & \hspace{0.7cm} f(t) = \max\{\lvert t \rvert^{\alpha}, \lvert t \rvert^{2} - k\}, ~~ {\rm and} \\
  & g (x_1, x_2)=\frac{1}{4N} \sum_{i=1}^{N} \left( a_{i} \sin(b_{i} x_1)^{2}+c_{i} \cos(d_{i} x_2)^{2} \right),
 \end{align*} 
 with $N=10$, ${a_{i}}$ and ${c_{i}}$ are independently and uniformly distributed on $[0, 20]$, while ${b_{i}}$ and ${d_{i}}$ are independently and uniformly distributed on $[-25, 25]$.

Then, $f$ is strongly quasiconvex on $\mathbb{R}^{n}$ as the maximum of two strongly quasiconvex functions (see \cite{GLM-Survey}), thus strongly star quasiconvex by Proposition \ref{inclusion}, while $g$ satisfies the assumptions of Proposition \ref{prop2:b}. Therefore, 
\begin{align*}
 h(x_{1}, x_{2}) & = f( \lVert (x_{1}, x_{2}) \rVert) g \left( \frac{(x_{1}, x_{2})}{ \lVert (x_{1}, x_{2}) \rVert}\right) 
\end{align*}
is strongly star quasiconvex. Note that $h$ is nonsmooth because $f$ is nonsmooth. We plot function $h$ in Figures \ref{fig:02} and \ref{fig:03}, below.
\begin{figure}[htbp] 
\centering \includegraphics[width=1.00\linewidth]{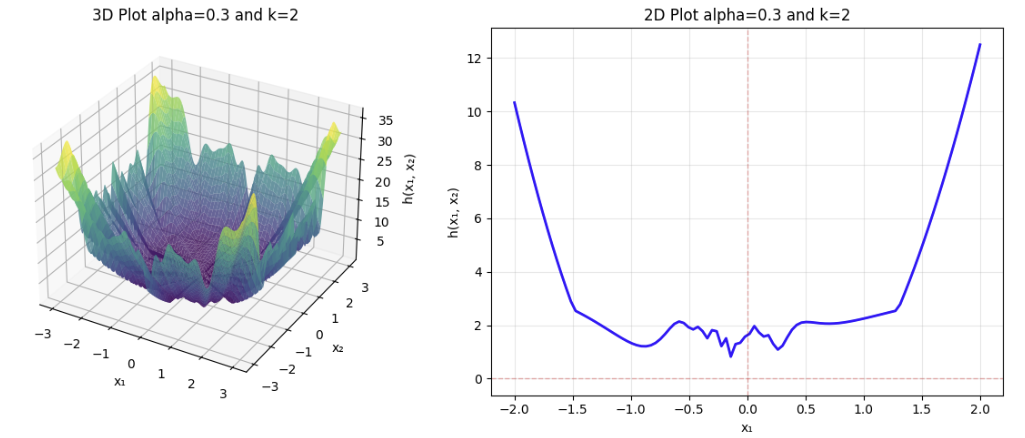} 
\caption{Function $h$ in Example \ref{exam:02} when $\alpha = 0.3$ and $k=2$. 
A 3D plot of $h$ (left) and an arbitrary segment that does not contain the minimizer (right).} \label{fig:02}
\end{figure}
\begin{figure}[htbp] 
\centering \includegraphics[width=1.00\linewidth]{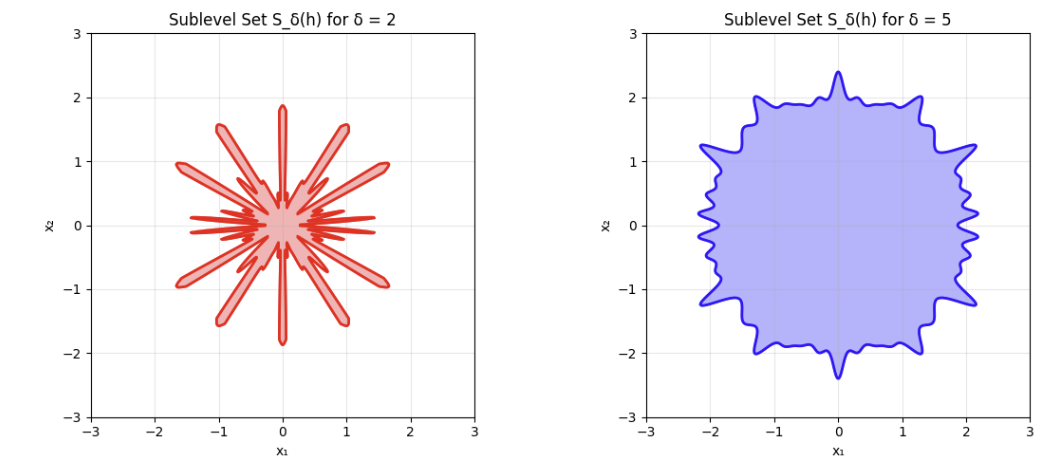} 
\caption{An illustration of the sublevel sets at height $\delta = 2$ (left) and $\delta = 5$ (right) of function $h$ des\-cri\-bed in Example \ref{exam:02} when $\alpha = 0.3$ and $k=2$.} \label{fig:03}
\end{figure}
\end{example}

Another interesting sufficient condition for functions to be star quasiconvex ($\gamma=0$) is given below for positively homogenous functions of any positive degree.

\begin{proposition}\label{pos:hom}
 Let $h: \mathbb{R}^{n} \rightarrow \mathbb{R}$ and $\overline{x} = 0 \in {\rm argmin}_{\mathbb{R}^{n}}\,h$. If $h$ is positively homogeneous of degree $p> 0$, then $h$ is star quasiconvex with respect to $\overline{x} = 0$.
\end{proposition}

\begin{proof}
 Since $h(y) \geq 0$ for all $y \in \mathbb{R}$ and $\lambda \in [0, 1]$, we have $(\lambda^{p} - 1) \leq 0$ for all $p>0$. Then, 
\begin{align*}
 h(y) \geq 0 & \Longleftrightarrow ~ (\lambda^{p} - 1) h(y) \leq 0 \notag \\
 & \Longleftrightarrow ~ \lambda^{p} h(y) \leq h(y)  \notag \\
 & \Longrightarrow ~ h(\lambda y) \leq h(y), ~ \forall ~ \lambda \in [0, 1] \notag \\
 & \Longleftrightarrow ~ h( (1-t) y) \leq h(y), ~ \forall ~ t \in [0, 1],
\end{align*}
i.e., $h$ is star quasiconvex at $0$.
\end{proof}

Now, let us study the properties of the proximity operator of a strongly star quasiconvex function $h$. To that end, we first observe:

\begin{remark}
 Let $\beta > 0$ in the statements below. 
\begin{itemize}
 \item[$(i)$] If $h$ is lsc and strongly star quasiconvex with $\gamma > 0$, then   ${\rm Prox}_{\beta h} \neq \emptyset$ and compact because $h$ is $2$-supercoercive by Corollary \ref{spconvex:super}.

 \item[$(ii)$] Since strongly quasiconvex functions are strongly star quasiconvex, ${\rm Prox}_{\beta h}$ is not necessarily a singleton in virtue of \cite[Remark 6]{Lara-9}.

 \item[$(iii)$] If $h$ is just star quasiconvex ($\gamma = 0$), then ${\rm Prox}_{\beta h}$ could be empty as for the quasiconvex function $h(x) = x^{3}$.
\end{itemize}
\end{remark}

A connection between the fixed points of the proximity operator and the unique minimizer of a strongly star quasiconvex function is given below. We emphasize that in the next two results, the considered set $K$ is star-shaped (but not necessarily convex).

\begin{proposition}\label{prop:fixed} {\bf (Fixed points criteria on star-shaped sets)}
 Let $K \subseteq \mathbb{R}^{n}$ be a closed set, $h: \mathbb{R}^{n} \rightarrow \overline{\mathbb{R}}$ be a proper lsc function such that $K \subseteq {\rm dom}\,h$, and $\beta > 0$. Suppose that $\overline{x} \in {\rm argmin}_{K}\,h$ and $K$ is star-shaped at $\overline{x}$. If $h$ is strongly star quasiconvex function on $K$ with modulus $\gamma > 0$, then
 \begin{equation*}\label{fixed:points}
  {\rm Fix} \left( {\rm Prox}_{\beta h} (K, \cdot) \right) = \frac{\gamma}{4}{\rm -argmin}_{K}\,h.
 \end{equation*}
\end{proposition}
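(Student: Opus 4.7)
The plan is to prove both inclusions separately, leveraging the quadratic growth property (Proposition \ref{prop:qwc}) for one direction and the star-shapedness of $K$ at $\overline{x}$ for the other. My first observation is that since $h$ is strongly star quasiconvex with $\gamma > 0$, it admits a unique minimizer on $K$, and Proposition \ref{prop:qwc} guarantees that this minimizer $\overline{x}$ satisfies the quadratic growth condition \eqref{qgc}; hence $\frac{\gamma}{4}$-${\rm argmin}_K\,h = \{\overline{x}\}$, so the problem reduces to showing ${\rm Fix}({\rm Prox}_{\beta h}(K, \cdot)) = \{\overline{x}\}$.

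For the inclusion $\frac{\gamma}{4}$-${\rm argmin}_K\,h \subseteq {\rm Fix}({\rm Prox}_{\beta h}(K, \cdot))$, I would verify directly that $\overline{x} \in {\rm Prox}_{\beta h}(K, \overline{x})$. For any $y \in K$, the quadratic growth condition \eqref{qgp:classb} yields
\[
 h(y) + \frac{1}{2\beta}\lVert y - \overline{x} \rVert^{2} \geq h(\overline{x}) + \frac{\gamma}{4}\lVert y - \overline{x} \rVert^{2} + \frac{1}{2\beta}\lVert y - \overline{x} \rVert^{2} \geq h(\overline{x}),
\]
so $\overline{x}$ minimizes $y \mapsto h(y) + \frac{1}{2\beta}\lVert y - \overline{x} \rVert^{2}$ over $K$, establishing the fixed point property.

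For the reverse inclusion, let $z \in {\rm Fix}({\rm Prox}_{\beta h}(K, \cdot))$, which means $h(z) \leq h(y) + \frac{1}{2\beta}\lVert y - z \rVert^{2}$ for every $y \in K$. Here I would exploit the star-shapedness of $K$ at $\overline{x}$: for any $\lambda \in (0, 1]$ the point $z_\lambda := \lambda \overline{x} + (1-\lambda)z$ belongs to $K$, so it is a valid test point in the prox inequality. Combining this with the strongly star quasiconvex inequality $h(z_\lambda) \leq h(z) - \lambda(1-\lambda)\frac{\gamma}{2}\lVert z - \overline{x} \rVert^{2}$ and the identity $\lVert z_\lambda - z \rVert^{2} = \lambda^{2}\lVert \overline{x} - z \rVert^{2}$, I obtain
\[
 0 \leq -\lambda(1-\lambda)\frac{\gamma}{2}\lVert z - \overline{x} \rVert^{2} + \frac{\lambda^{2}}{2\beta}\lVert z - \overline{x} \rVert^{2}, \quad \forall \, \lambda \in (0, 1].
\]
If $\lVert z - \overline{x} \rVert > 0$, dividing by $\lambda\lVert z - \overline{x} \rVert^{2}$ gives $(1-\lambda)\frac{\gamma}{2} \leq \frac{\lambda}{2\beta}$, whose limit as $\lambda \to 0^+$ yields the contradiction $\frac{\gamma}{2} \leq 0$. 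Hence $z = \overline{x}$.

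The main obstacle is the second inclusion, and specifically the need to generate feasible test points $z_\lambda$ for the prox inequality without relying on convexity of $K$; this is precisely where the star-shapedness assumption on $K$ at $\overline{x}$ plays its essential role, since convexity of $K$ is not available. Once segments from $\overline{x}$ are guaranteed to lie in $K$, the argument proceeds by a standard linearization-in-$\lambda$ trick that pits the linear term from strong star quasiconvexity against the quadratic prox penalty.
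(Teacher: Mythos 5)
Your proof is correct and follows essentially the same route as the paper: the forward inclusion is verified directly from the minimality of $\overline{x}$ (the paper simply calls it straightforward), and the reverse inclusion uses exactly the same test points $z_\lambda = \lambda\overline{x} + (1-\lambda)z$, which are feasible by star-shapedness, combined with the strong star quasiconvexity inequality. The only cosmetic difference is that you let $\lambda \downarrow 0$ to reach a contradiction, whereas the paper picks a fixed $\lambda < \frac{\gamma\beta}{1+\gamma\beta}$ to conclude $\lVert z - \overline{x}\rVert^{2} \leq 0$ directly; both are equivalent.
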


\begin{proof}
 $(\supseteq)$ Straightforward. 
 
 $(\subseteq)$ Let $x^{*} \in {\rm Prox}_{\beta h} (K, x^{*})$. Then, $h(x^{*}) \leq h(x) + \frac{1}{2 \beta} \lVert x^{*} - x \rVert^{2}$ for all $x \in K$. Since $K$ is star-shaped at $\overline{x}$, take $x = \lambda \overline{x} + (1-\lambda) x^{*} \in K$ with $\lambda \in [0, 1]$. Since $h$ is strongly star quasiconvex on $K$ with modulus $\gamma > 0$, we have

 $h(x^{*}) \leq h(\lambda \overline{x} + (1-\lambda) x^{*}) + \frac{1}{2 \beta} \lVert \lambda (\overline{x} - x^{*} ) \rVert^{2}$
 
 $h(x^{*}) - \lambda \left( 1 - \lambda \right) \frac{\gamma}{2} \lVert x^{*} - \overline{x} \rVert^{2} + \frac{\lambda^{2}}{2 \beta} \lVert x^{*} - \overline{x} \rVert^{2},$
 
and so 

$\frac{\lambda}{2} \left( \gamma - \lambda \gamma - \frac{\lambda}{\beta} \right) \lVert x^{*} - \overline{x} \rVert^{2} \leq 0, ~ \forall ~ \lambda \in \, ]0, 1].$

 Taking $0 < \lambda < \frac{\gamma \beta}{1 + \gamma \beta} < 1$, we obtain $\lVert x^{*} - \overline{x} \rVert^{2} \leq 0$, thus $x^{*} = \overline{x} \in {\rm argmin}_{K}\,h$.
 
 Since $h$ is strongly star quasiconvex with modulus $\gamma > 0$, the minimizer $\overline{x}$ is unique and satisfies the quadratic growth property \eqref{qgp:classb}, i.e., $x^{*} = \overline{x} \in \frac{\gamma}{4}{\rm -argmin}_{K}\,h$, which com\-ple\-tes the proof. 
\end{proof}

A relationship between the minimizer of the proximity operator and the minimizers of the function is given below

\begin{proposition}\label{prox:sqcx} {\bf (Relationship with proximity operator on star-sha\-ped sets)}
 Let $K \subseteq \mathbb{R}^{n}$ be a closed set, $h: \mathbb{R}^{n} \rightarrow \overline{\mathbb{R}}$ be a proper lsc function such that $K \subseteq {\rm dom}\,h$, $\beta > 0$, and $z \in K$. Suppose that $\overline{x} \in {\rm argmin}_{K}\,h$, $K$ is star-shaped at $\overline{x}$ and $h$ is strongly star quasiconvex function on $K$ with modulus $\gamma > 0$. If $x^{*} \in {\rm Prox}_{\beta h} (K, z)$, then
 \begin{align}\label{lin:conv1}
  \frac{\gamma}{2} \lVert x^{*} - \overline{x} \rVert^{2} \leq \frac{1}{\beta} \langle x^{*} - z, \overline{x} - x^{*} \rangle.
 \end{align}
\end{proposition}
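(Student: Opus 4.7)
The plan is to exploit the optimality of $x^*$ for the proximal subproblem together with the star quasiconvex inequality applied to a convex combination of $x^*$ and $\overline{x}$, which remains in $K$ thanks to star-shapedness. Since $x^* \in \mathrm{Prox}_{\beta h}(K, z)$, for every $y \in K$ one has
\begin{equation*}
 h(x^*) + \frac{1}{2\beta} \lVert x^* - z \rVert^{2} \leq h(y) + \frac{1}{2\beta} \lVert y - z \rVert^{2}.
\end{equation*}
Because $K$ is star-shaped at $\overline{x}$, the feasible perturbation $y_{\lambda} := \lambda \overline{x} + (1-\lambda) x^{*}$ belongs to $K$ for every $\lambda \in [0,1]$; this is the key test point that allows the proximal inequality to interact with strong star quasiconvexity.

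Next, I would apply the strong star quasiconvexity of $h$ with the pair $(\overline{x}, x^{*})$ to bound $h(y_{\lambda})$ from above by $h(x^{*}) - \lambda(1-\lambda)\frac{\gamma}{2}\lVert x^{*} - \overline{x}\rVert^{2}$. Substituting this into the proximal inequality above, the $h(x^*)$ terms cancel and I am left with
\begin{equation*}
 \lambda(1-\lambda)\frac{\gamma}{2}\lVert x^{*} - \overline{x}\rVert^{2} \leq \frac{1}{2\beta}\left( \lVert y_{\lambda} - z \rVert^{2} - \lVert x^{*} - z \rVert^{2}\right).
\end{equation*}
Writing $y_{\lambda} - z = (x^{*} - z) + \lambda(\overline{x} - x^{*})$ and expanding, the right-hand side becomes $\frac{1}{2\beta}\bigl(2\lambda\langle x^{*}-z,\overline{x}-x^{*}\rangle + \lambda^{2}\lVert \overline{x}-x^{*}\rVert^{2}\bigr)$.

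Finally, I divide through by $\lambda > 0$ and then let $\lambda \to 0^{+}$; the $\lambda^{2}$-term disappears, the factor $(1-\lambda)$ on the left tends to $1$, and I obtain exactly
\begin{equation*}
 \frac{\gamma}{2} \lVert x^{*} - \overline{x} \rVert^{2} \leq \frac{1}{\beta} \langle x^{*} - z, \overline{x} - x^{*} \rangle,
\end{equation*}
which is \eqref{lin:conv1}. The only subtle step is recognizing that the inequality must be tested with a one-parameter family $y_{\lambda}$ and then passed to the limit $\lambda \downarrow 0$, since strong star quasiconvexity only relates $h(x^{*})$ to values on the segment $[\overline{x}, x^{*}]$; once this is done the rest is algebra using identity \eqref{iden:1} (or direct expansion), and the star-shapedness of $K$ at $\overline{x}$ is precisely what guarantees feasibility of $y_{\lambda}$ without needing convexity of $K$.
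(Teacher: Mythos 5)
Your proposal is correct and follows essentially the same route as the paper's proof: the same proximal inequality tested at $y_{\lambda} = \lambda \overline{x} + (1-\lambda)x^{*}$, the same use of strong star quasiconvexity, and the same division by $\lambda$ followed by the limit $\lambda \downarrow 0$. The only (cosmetic) difference is that you expand $\lVert y_{\lambda} - z \rVert^{2}$ directly, whereas the paper routes the algebra through identities \eqref{iden:1} and \eqref{3:points}; both yield the identical estimate.
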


\begin{proof}
 Since $x^{*} \in {\rm Prox}_{\beta h} (K, z)$, we have
 \begin{align*}\label{ppa:dec}
  h(x^{*}) + \frac{1}{2 \beta} \lVert x^{*} - z \rVert^{2} & \leq h(x) + \frac{1}{2 \beta} \lVert x - z \rVert^{2}, ~ \forall ~ x \in K. 
 \end{align*} 
 Since $K$ is star-shaped at $\overline{x}$ and $h$ is strongly star quasiconvex on $K$ with modulus $\gamma > 0$, taking $x = \lambda \overline{x} + (1 - \lambda) x^{*} \in K$ for all $\lambda \in [0, 1]$, we have
 \begin{align}
  h(x^{*}) + & \frac{1}{2 \beta} \lVert x^{*} - z \rVert^{2} \leq h(\lambda \overline{x} + (1 - \lambda) x^{*}) + \frac{1}{2 \beta} \lVert \lambda 
  (\overline{x} - z) + (1-\lambda)(x^{*} - z) \rVert^{2}, \notag \\
  & \leq h(x^{*}) - \lambda (1-\lambda) \frac{\gamma}{2} \lVert x^{*} - \overline{x} \rVert^{2} + \frac{\lambda}{2 \beta} \lVert z - \overline{x} \rVert^{2} + \frac{(1-\lambda)}{2 \beta} \lVert x^{*} - z \rVert^{2} \notag \\
  & ~~~  - \frac{\lambda (1-\lambda)}{2 \beta} \lVert x^{*} - \overline{x} \rVert^{2}, ~ \forall ~ \lambda \in [0, 1], \notag
\end{align} 
in virtue of \eqref{iden:1}. Then, it follows from \eqref{3:points} that
 \begin{align*}
  & ~\, 0 \leq \frac{\lambda}{\beta} \langle x^{*} - z, \overline{x} - x^{*} \rangle + \frac{\lambda}{2} \left( \frac{\lambda}{\beta} - \gamma + \lambda \gamma \right) \lVert x^{*} - \overline{x} \rVert^{2},
 \end{align*}
 and the result follows by multiplying by $\frac{1}{\lambda}$, and then taking $\lambda \downarrow 0$.
\end{proof}

\subsection{The Differentiable Case}

Under differentiability, more can be said regarding (strongly) star quasiconvex functions. As a first result, we revisit the following characterization (see \cite[Theorem 3.3]{NTV}).

\begin{proposition}\label{char:diff} {\bf (First-order characterization)}
 Let $h: \mathbb{R}^{n} \rightarrow \mathbb{R}$ be a di\-ffe\-ren\-tia\-ble function and $\overline{x} \in {\rm argmin}_{\mathbb{R}^{n}}\,h$. Then, $h$ is (strongly) star qua\-si\-con\-vex with modulus $\gamma \geq 0$ with respect to $\overline{x}$ if and only if for every $y \in \mathbb{R}^{n}$, we have
 \begin{equation}\label{char:grad}
  \langle \nabla h(y), y - \overline{x} \rangle \geq \frac{\gamma}{2} \lVert y - \overline{x} \rVert^{2}.
 \end{equation}
\end{proposition}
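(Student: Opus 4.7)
The plan is to prove both directions by reducing the multivariate statement to a one-dimensional argument along the segment between $y$ and $\overline{x}$, using the standard trick of differentiating (for $\Rightarrow$) and integrating (for $\Leftarrow$) the scalar function $\varphi(\lambda) := h(\lambda \overline{x} + (1-\lambda)y)$.

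For the necessity direction, I would fix an arbitrary $y \in \mathbb{R}^{n}$ and start from the defining inequality \eqref{ss:qcx}, rewriting $\lambda \overline{x} + (1-\lambda)y = y - \lambda(y - \overline{x})$. After moving $h(y)$ to the left and dividing by $\lambda > 0$, the left-hand side is a difference quotient whose limit as $\lambda \downarrow 0$ equals the directional derivative of $h$ at $y$ in the direction $-(y-\overline{x})$, which by differentiability is $-\langle \nabla h(y), y - \overline{x}\rangle$. On the right-hand side, $-(1-\lambda)\frac{\gamma}{2}\lVert y - \overline{x}\rVert^{2}$ converges to $-\frac{\gamma}{2}\lVert y - \overline{x}\rVert^{2}$, which yields \eqref{char:grad} after multiplying by $-1$.

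For the sufficiency direction, I would fix $y \in \mathbb{R}^{n}$ and define $\varphi: [0,1] \to \mathbb{R}$ by $\varphi(\lambda) := h(\lambda \overline{x} + (1-\lambda)y)$; it is differentiable with $\varphi'(\lambda) = \langle \nabla h(y_{\lambda}), \overline{x} - y\rangle$, where $y_{\lambda} := \lambda \overline{x} + (1-\lambda)y$. The key observation is that $y_{\lambda} - \overline{x} = (1-\lambda)(y - \overline{x})$, so the hypothesis \eqref{char:grad} applied at $y_{\lambda}$ gives, after dividing by $(1-\lambda) > 0$, the bound $\langle \nabla h(y_{\lambda}), y - \overline{x}\rangle \geq \frac{\gamma}{2}(1-\lambda)\lVert y - \overline{x}\rVert^{2}$, equivalently $\varphi'(\lambda) \leq -\frac{\gamma}{2}(1-\lambda)\lVert y - \overline{x}\rVert^{2}$. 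Integrating on $[0,\lambda]$ yields
\[
 \varphi(\lambda) - \varphi(0) \leq -\frac{\gamma}{2}\left(\lambda - \frac{\lambda^{2}}{2}\right)\lVert y - \overline{x}\rVert^{2}.
\]
Since $\lambda - \frac{\lambda^{2}}{2} \geq \lambda(1-\lambda)$ for $\lambda \in [0,1]$, the right-hand side is dominated by $-\lambda(1-\lambda)\frac{\gamma}{2}\lVert y - \overline{x}\rVert^{2}$, recovering \eqref{ss:qcx}.

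The only subtle point I anticipate is the last elementary comparison $\lambda - \frac{\lambda^{2}}{2} \geq \lambda(1-\lambda)$, which ensures that the estimate obtained from integration is in fact stronger than the target quasiconvexity inequality; without it one might worry that the gradient condition is only equivalent to a weaker growth than \eqref{ss:qcx}. Everything else (the differentiation of $\varphi$, the chain rule, the rescaling $y_{\lambda} - \overline{x} = (1-\lambda)(y - \overline{x})$) is routine once the reduction to one dimension is made. The case $y = \overline{x}$ is trivial since both sides of \eqref{char:grad} and \eqref{ss:qcx} vanish.
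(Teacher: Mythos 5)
Your proof is correct and follows essentially the same route as the paper: the necessity direction is the standard difference-quotient limit (which the paper delegates to \cite[Theorem 3.3]{NTV}), and the sufficiency direction integrates the gradient bound along the segment, producing exactly the paper's intermediate inequality \eqref{no-integral} under the reparametrization $t = 1-\lambda$, since $\frac{\gamma}{4}(1-t^{2})\lVert y-\overline{x}\rVert^{2} = \frac{\gamma}{2}\left(\lambda - \frac{\lambda^{2}}{2}\right)\lVert y-\overline{x}\rVert^{2}$. The only difference is that you finish with the elementary comparison $\lambda - \frac{\lambda^{2}}{2} \geq \lambda(1-\lambda)$ rather than invoking Corollary \ref{coro:Charact}, which makes your argument self-contained but is not a substantively different approach.
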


\begin{proof}
 $(\Rightarrow)$: Exactly as in \cite[Theorem 3.3]{NTV}.

$(\Leftarrow)$: Let $y \in \mathbb{R}^{n}$ and $z := \overline{x} + t(y -
 \overline{x})$, with $0<t<1$. Take any $s \in [t, 1]$ and set $\overline{x}_{s} 
 := \overline{x} + s(y-\overline{x})$, $g(s) = h(\overline{x}_{s})$. Then, $\left \langle \nabla h(\overline{x}_{s}), \overline{x}_{s} - 
 \overline{x} \right \rangle \geq \frac{\gamma}{2} \lVert \overline{x}_{s} -
 \overline{x} \rVert^{2}$. Thus, $\left \langle \nabla h(\overline{x}_{s}), y -
 \overline{x} \right \rangle \geq \frac{\gamma}{2} s \lVert y - \overline{x} 
 \rVert^{2}$ or $g^{\prime} (s) \geq \frac{\gamma}{2}s \lVert y - \overline{x}
 \rVert^{2}$. By integrating between $t$ and $1$, we have
 \begin{align*}
  & \hspace{2.7cm} g(1) - g(t) \geq \frac{\gamma}{4} \lVert y - \overline{x}
  \rVert^{2} (1-t^{2})  \\
  & \Longleftrightarrow ~ h(y) - h (\overline{x} + t(y - \overline{x})) \geq
  \frac{\gamma}{4} \lVert y-x \rVert^{2} (1-t^{2}).
 \end{align*}
 Hence, $h$ satisfies \eqref{no-integral}, i.e., $h$ is (strongly) star quasiconvex with mo\-du\-lus $\gamma \geq 0$ with respect to $\overline{x}$ by Corollary \ref{coro:Charact}.
\end{proof}

\begin{remark}
 \begin{itemize}
  \item[$(i)$] Observe that when $\gamma > 0$, the previous proposition shows that differentiable strongly star quasiconvex functions coincides with functions satisfying the {\it Restricted Secant Inequality} (RSI) property (see \cite{Yi,ZY} among others). 
 
  \item[$(ii)$] When $\gamma = 0$, differentiable star quasiconvex functions includes {\it Variationally Coherent} (VC) functions, which were developed in \cite{ZBoyd} (or differentiable functions which satisfies \cite[Assumption $(1.2)$]{SS}). This inclusion may be strict as the following example shows.

  Recall that a differentiable function $\phi$ is said to be Variationally Coherent if:
  \begin{equation}\label{VC}
  \langle \nabla \phi(x), x - x^{*} \rangle \geq 0, ~ \forall ~ x \in \mathbb{R}^{n}, ~ \forall ~ x^{*} \in {\rm argmin}_{\mathbb{R}^{n}}\,\phi, \tag{VC}
  \end{equation}
  and there exists some $x^{*}_{0} \in {\rm argmin}_{\mathbb{R}^{n}}\,\phi$ such that equality holds only if $x \in {\rm argmin}_{ \mathbb{R}^{n}}\,\phi$.
 
  Let $h: \mathbb{R}^{2} \rightarrow \mathbb{R}$ be given by $h(x, y) = x^{2} (1+y^{2})$. Here ${\rm argmin}_{\mathbb{R}^{2}}\,h = \{(0, t): t \in \mathbb{R}\}$ and $\min \,h = 0$. By Proposition \ref{pos:hom}, $h$ is star quasiconvex with respect to $(0, 0)$. However, it is not VC since relation \eqref{VC} does not hold for every minimizer as we show next. Indeed, if it is VC, then 
\begin{align*}
 \langle \nabla h(x, y), (x,y) - (0, t) \rangle & = 2x^{2}(1 + 2y^{2} -ty) \geq 0, ~ \forall ~ x, y, t \in \mathbb{R}.
\end{align*} 
 Take for instance $x=1$, $y=\frac{1}{4}$ and $t=8$. Then,
 $$2(1)^{2}(1 + 2 (\frac{1}{4})^{2} - 8 \frac{1}{4}) = 2 (1 + \frac{1}{8} - 2) < 0.$$
 Therefore, $h$ is not VC. 
 \end{itemize}
\end{remark}

Recall that a differentiable function $h: \mathbb{R}^{n} \rightarrow \mathbb{R}$ satisfies the PL property \cite{L,P1} if there exists $\mu > 0$ such that
\begin{equation*}\label{PL:def}
 \lVert \nabla h(x) \rVert^{2} \geq \mu (h(x) - h(\overline{x})), ~ \forall ~ x \in \mathbb{R}^{n},
\end{equation*}
where $\overline{x} \in {\rm argmin}_{ \mathbb{R}^{n}}\,h$. The PL property is implied by strong convexity, but it allows for multiple minimizers and does not require any convexity assumption (see \cite{Ka-Nu-Sc}).

Furthermore, as noted in \cite[Lemma 4]{HADR}, $L$-smooth functions which satisfies relation \eqref{char:grad} with $\gamma > 0$ are strongly quasar-convex, i.e., we have the following sufficient condition for differentiable strongly star quasiconvex functions to be strongly quasar-convex.

\begin{corollary} \label{imply:quasar}
 Let $h: \mathbb{R}^{n} \rightarrow \mathbb{R}$ be a differentiable with $L$-Lipschitz continuous gradient ($L>0$) and strongly star quasiconvex with modulus $\gamma > 0$. If $0 < \rho < \min\{ \frac{2 \gamma}{L}, 1\}$, then $h$ is $(\rho, \gamma^{\prime})$-strongly quasar-convex with modulus $\rho \in \, ]0, 1]$ and $\gamma^{\prime} = \frac{\gamma}{\rho} - \frac{L}{2} > 0$.
\end{corollary}

Another consequence of Proposition \ref{char:diff} is that strongly star quasiconvex functions satisfy the PL property when they have a Lipschitz continuous gradient. Since the proof is similar to the one in \cite[Proposition 7]{LMV}, it is omitted.

\begin{proposition}\label{prop:PL}  {\bf (Polyak-{\L}ojasiewicz property)}
 Let $h: \mathbb{R}^{n} \rightarrow \mathbb{R}$ be a differentiable with $L$-Lipschitz continuous gradient ($L>0$) that is strongly star quasiconvex with modulus $\gamma > 0$. Then, the PL property holds with modulus $\mu := \frac{\gamma^{2}}{2L} > 0$, that is,
 \begin{equation*}\label{eq:PL}
  \lVert \nabla h(x) \rVert^{2} \geq \frac{\gamma^{2}}{2L} (h(x) - h(\overline{x})), ~ \forall ~ x \in  \mathbb{R}^n,
 \end{equation*}
 where $\overline{x} = {\rm argmin}_{\mathbb{R}^n}\,h$.
\end{proposition}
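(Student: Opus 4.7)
The plan is to derive the PL inequality by combining the gradient lower bound coming from the first-order characterization of strong star quasiconvexity (Proposition \ref{char:diff}) with the descent-lemma upper bound on the function-value gap provided by $L$-smoothness.

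First, I would invoke Proposition \ref{char:diff} to get, for every $y \in \mathbb{R}^{n}$,
\[
 \left\langle \nabla h(y), y - \overline{x} \right\rangle \geq \frac{\gamma}{2} \lVert y - \overline{x} \rVert^{2}.
\]
Applying Cauchy--Schwarz to the left-hand side yields $\lVert \nabla h(y) \rVert \cdot \lVert y - \overline{x} \rVert \geq \frac{\gamma}{2} \lVert y - \overline{x} \rVert^{2}$, and hence (trivially for $y = \overline{x}$)
\[
 \lVert \nabla h(y) \rVert \geq \frac{\gamma}{2} \lVert y - \overline{x} \rVert.
\]

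Second, since $h$ is differentiable on $\mathbb{R}^{n}$ and $\overline{x} \in \mathrm{argmin}_{\mathbb{R}^{n}} h$ is an unconstrained minimizer, Fermat's rule gives $\nabla h(\overline{x}) = 0$. Using the standard descent lemma consequence of $L$-smoothness (from \eqref{L:smooth}) centred at $\overline{x}$,
\[
 h(y) \leq h(\overline{x}) + \langle \nabla h(\overline{x}), y - \overline{x} \rangle + \frac{L}{2} \lVert y - \overline{x} \rVert^{2} = h(\overline{x}) + \frac{L}{2} \lVert y - \overline{x} \rVert^{2},
\]
which rearranges to $\lVert y - \overline{x} \rVert^{2} \geq \frac{2}{L} (h(y) - h(\overline{x}))$.

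Finally, I would square the gradient lower bound and chain the two estimates:
\[
 \lVert \nabla h(y) \rVert^{2} \geq \frac{\gamma^{2}}{4} \lVert y - \overline{x} \rVert^{2} \geq \frac{\gamma^{2}}{4} \cdot \frac{2}{L} (h(y) - h(\overline{x})) = \frac{\gamma^{2}}{2L} (h(y) - h(\overline{x})),
\]
which is precisely \eqref{eq:PL}. There is no real obstacle here: the argument is a clean two-sided sandwich of $\lVert y - \overline{x} \rVert^{2}$, with the strong star quasiconvexity supplying the gradient lower bound from above and the Lipschitz gradient supplying the function-gap upper bound from below. The only subtlety worth mentioning is that uniqueness of the minimizer (and the vanishing of $\nabla h(\overline{x})$) is guaranteed by $\gamma > 0$ together with differentiability, so the statement is well posed.
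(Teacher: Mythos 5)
Your proof is correct and follows essentially the same route as the paper's (omitted) proof, which defers to \cite[Proposition 7]{LMV}: the first-order inequality of Proposition \ref{char:diff} plus Cauchy--Schwarz gives $\lVert \nabla h(y)\rVert \geq \frac{\gamma}{2}\lVert y-\overline{x}\rVert$, and the descent lemma with $\nabla h(\overline{x})=0$ bounds $h(y)-h(\overline{x})$ by $\frac{L}{2}\lVert y-\overline{x}\rVert^{2}$, yielding exactly the constant $\frac{\gamma^{2}}{2L}$. Nothing is missing.
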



\section{Linear Convergence of Gradient-type Me\-thods Revisited}\label{sec:04}

Using Propositions \ref{char:diff} and \ref{prop:PL}, and Corollary \ref{imply:quasar}, we can ensure the linear convergence of the Heavy-ball and Nesterov accelerations of the gradient method. These results were already proved in, for instance, \cite{HLMV,HADR,LMV}. We mention them here for completeness. 

We emphasize that during the whole section, the function $h$ is assumed to be strongly star quasiconvex with modulus $\gamma > 0$.

\begin{theorem}\label{convergenceHeavyball} {\bf (Linear convergence of heavy-ball acceleration)}
 Let $h: \mathbb{R}^n \rightarrow \mathbb{R}$ be a differentiable strongly star quasiconvex func\-tion with modulus $\gamma > 0$ with $L$-Lipschitz continuous gradient and $\{\overline{x}\} = {\rm argmin}_{\mathbb{R}^n} h$. Suppose that
 \begin{equation}
  \alpha \in \,  \left[0, \frac{\sqrt{2}}{2}\right[, ~~~ \beta  \in \, \left]0, \frac{1 - 2 \alpha^{2}}{L} \right]. \notag
 \end{equation} 
 Then, the se\-quen\-ce $\{x_k\}_{k}$, generated by the heavy ball method:
 \begin{align*}
\left\{
 \begin{array}{ll}\label{inertialHess:alg}
  ~~~ y_{k} = x_{k} +   \alpha (x_{k} - x_{k - 1}), \\ [2mm]
  x_{k+1} = y_{k} - \beta \nabla h(x_{k}),
 \end{array}
 \right.
\end{align*}
converges linearly to $\overline{x}$ and the sequence $\{h(x_k)\}_{k}$ converges linearly to the optimal value $h^{*} := h(\overline{x})$.
\end{theorem}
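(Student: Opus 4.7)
The plan is to adapt the Lyapunov-function argument developed for strongly quasar-convex and strongly quasiconvex functions in \cite{HLMV,LMV}. All the hypotheses required to run that argument are now available in the strongly star quasiconvex setting: $L$-smoothness, the restricted secant inequality $\langle \nabla h(y), y-\overline{x}\rangle \geq \tfrac{\gamma}{2}\lVert y-\overline{x}\rVert^{2}$ from Proposition \ref{char:diff}, the Polyak-{\L}ojasiewicz property with modulus $\tfrac{\gamma^{2}}{2L}$ from Proposition \ref{prop:PL}, and the quadratic growth bound \eqref{qgp:classb}. The target is an energy functional of the form $V_{k} := h(x_{k}) - h^{*} + c\,\lVert x_{k} - x_{k-1}\rVert^{2}$, with $c>0$ to be calibrated, for which I would prove $V_{k+1} \leq \rho V_{k}$ with $\rho \in [0,1)$.

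The first step is to apply the $L$-smoothness descent lemma at $x_{k+1}$ and substitute the iteration $x_{k+1} - x_{k} = \alpha(x_{k}-x_{k-1}) - \beta \nabla h(x_{k})$. Expanding $\lVert x_{k+1}-x_{k}\rVert^{2}$ via identity \eqref{iden:1} and splitting the cross term $\langle \nabla h(x_{k}), x_{k}-x_{k-1}\rangle$ by Young's inequality --- with a free parameter tuned so that the total coefficient of $\lVert \nabla h(x_{k})\rVert^{2}$ stays strictly negative precisely when $\beta \leq (1-2\alpha^{2})/L$ --- produces positive constants $A$ and $B$ depending only on $\alpha$, $\beta$, $L$ such that
\[
 h(x_{k+1}) - h^{*} \leq (h(x_{k}) - h^{*}) - A\,\lVert \nabla h(x_{k})\rVert^{2} + B\,\lVert x_{k}-x_{k-1}\rVert^{2}.
\]
The PL inequality then replaces $-A\lVert \nabla h(x_{k})\rVert^{2}$ by $-\tfrac{A\gamma^{2}}{2L}(h(x_{k})-h^{*})$, and a separate Young-plus-PL bound on $\lVert x_{k+1}-x_{k}\rVert^{2}$ closes the Lyapunov recursion. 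Choosing $c$ small enough that the coefficient of $(h(x_{k})-h^{*})$ in $V_{k+1}$ stays below $1$, and large enough to dominate $B$, yields $V_{k+1} \leq \rho V_{k}$. Linear decay of $h(x_{k})-h^{*}$ follows, and the quadratic growth inequality \eqref{qgp:classb} in the form $\lVert x_{k}-\overline{x}\rVert^{2} \leq \tfrac{4}{\gamma}(h(x_{k})-h^{*})$ transfers the linear rate to the iterates.

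The main obstacle is pure bookkeeping: the Young parameter and the Lyapunov weight $c$ must be chosen so that every leftover coefficient has the correct sign throughout the full range $\alpha \in [0,\sqrt{2}/2)$ and $\beta \in (0,(1-2\alpha^{2})/L]$. The upper bound $\alpha < \sqrt{2}/2$ is precisely what keeps $1-2\alpha^{2} > 0$, so that the admissible step-size interval is nonempty; the edge case $\alpha = 0$ collapses to plain gradient descent with step $\beta \leq 1/L$ and PL contraction factor $1 - \tfrac{\gamma^{2}}{4L^{2}}$. Since Propositions \ref{char:diff} and \ref{prop:PL} supply exactly the same structural inequalities that drive the heavy-ball proofs in \cite{HLMV,LMV} for strongly quasiconvex and strongly quasar-convex objectives, the calibration and the final contraction factor can be read off from those references almost verbatim, which is why the authors reasonably omit the details.
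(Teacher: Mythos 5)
Your proposal is correct and follows essentially the same route as the paper's proof (which is the Lyapunov-energy argument of \cite[Theorem 13]{HLMV} with $\theta=0$): descent lemma plus the iteration identity, an energy $h(x_k)-h^*+c\lVert x_k-x_{k-1}\rVert^2$, the PL inequality of Proposition \ref{prop:PL} to close the contraction, and a final transfer to the iterates. The only cosmetic difference is that the paper passes from $\lVert\nabla h(x_k)\rVert$ to $\lVert x_k-\overline{x}\rVert$ via the restricted secant condition \eqref{char:grad} rather than via quadratic growth \eqref{qgp:classb}; both are valid here.
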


\begin{proof}
 The same as that in \cite[Theorem 13]{HLMV} with $\theta = 0$.
\end{proof}

As a direct consequence, we have the following convergence rate result.

\begin{corollary}{\bf (Convergence rate)} \label{HB:rates}
 Under the assumptions of Theorem \ref{convergenceHeavyball}, we have
 the following convergence rate
 \begin{align*}
  & \frac{\gamma}{4} \|x_{k+1} - \overline{x}\|^2 \leq h(x_{k+1}) - h^{*} \leq \left(1 - \frac{\rho}{\sigma} \right)^k \mathcal{E}_{1}, \\
  & ~ \|x_{k+1} - x_{k}\|^2 \leq \frac{\beta}{\alpha^2} \left(1-\frac{\rho}{\sigma} \right)^k \mathcal{E}_1,
 \end{align*}
 where $\mathcal{E}_{1} := := h(x_{k}) - h^* + \frac{\alpha^2}{\beta} \lVert x_{k} - x_{k-1} \rVert^2$, $\rho := \min\left\{ \frac{\beta}{2}, \frac{1}{2 \beta} \left(1 - \beta L - 2 \alpha^2 \right) \right\}$, and $\sigma := \max\left\{ \frac{15}{\beta}, \frac{2L}{\gamma^2} + 15 \beta \right\}$.
\end{corollary}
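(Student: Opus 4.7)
The plan is to read off the claimed rates directly from the energy-function contraction established in the proof of Theorem \ref{convergenceHeavyball}. That proof builds the Lyapunov-type quantity $\mathcal{E}_k := h(x_k) - h^* + \frac{\alpha^2}{\beta}\|x_k - x_{k-1}\|^2$ and, combining the descent lemma for $L$-smooth functions with the Polyak--{\L}ojasiewicz bound of Proposition \ref{prop:PL} and the update rule \eqref{inertialHess:alg}, yields the one-step contraction $\mathcal{E}_{k+1} \le (1 - \rho/\sigma)\,\mathcal{E}_k$ with the $\rho$ and $\sigma$ given in the corollary. Iterating this inequality $k$ times immediately gives $\mathcal{E}_{k+1} \le (1 - \rho/\sigma)^k\,\mathcal{E}_1$, which is the engine driving both estimates.

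For the first two-sided estimate, I would observe that both summands defining $\mathcal{E}_{k+1}$ are nonnegative (the first because $h^{*} = \min_{\mathbb{R}^n} h$, the second trivially), so dropping the momentum term gives
\[
h(x_{k+1}) - h^{*} \;\le\; \mathcal{E}_{k+1} \;\le\; \left(1 - \tfrac{\rho}{\sigma}\right)^{k} \mathcal{E}_{1}.
\]
The lower bound $\tfrac{\gamma}{4}\|x_{k+1} - \overline{x}\|^2 \le h(x_{k+1}) - h^{*}$ is then nothing more than the quadratic growth property \eqref{qgp:classb} of Proposition \ref{prop:qwc}, applied at $y = x_{k+1}$ with $K = \mathbb{R}^n$ (which is trivially star-shaped at $\overline{x}$). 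Chaining the two inequalities delivers the first displayed line.

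For the second estimate I would keep only the momentum summand and drop the nonnegative function-value gap to obtain
\[
\tfrac{\alpha^{2}}{\beta}\|x_{k+1} - x_{k}\|^{2} \;\le\; \mathcal{E}_{k+1} \;\le\; \left(1 - \tfrac{\rho}{\sigma}\right)^{k}\mathcal{E}_{1},
\]
and then multiply through by $\beta/\alpha^{2}$.

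There is no substantive obstacle: the entire content is already packaged inside Theorem \ref{convergenceHeavyball}, and the corollary just harvests two nonnegativity inequalities from the Lyapunov function together with one application of the quadratic growth bound. The only point requiring a sanity check is that $\rho/\sigma \in (0,1)$, so that $(1-\rho/\sigma)^{k}$ is a genuine geometric contraction; this is immediate from the parameter restrictions $\alpha \in [0,\sqrt{2}/2[$ and $\beta \in \,]0,(1-2\alpha^{2})/L]$, which force both $\rho > 0$ and $\sigma > \rho$.
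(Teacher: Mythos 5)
Your proposal is correct and follows exactly the route the paper intends: the corollary is harvested from the Lyapunov contraction $\mathcal{E}_{k+1} \le (1-\rho/\sigma)^{k}\mathcal{E}_{1}$ established inside the proof of Theorem \ref{convergenceHeavyball}, combined with dropping one nonnegative summand of $\mathcal{E}_{k+1}$ at a time and invoking the quadratic growth property of Proposition \ref{prop:qwc} for the lower bound. No discrepancy with the paper's argument.
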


\begin{remark}
 Note that if $\alpha = 0$, then we obtain convergence rates for the classical gradient method.
\end{remark}

We also recall the linear convergence rate for the Nesterov acceleration obtained in \cite{HADR}.

\begin{theorem}\label{convergenceNesterov2} {\bf (Linear convergence of Nesterov acceleration)} {\rm (\cite[Coro\-lla\-ry 1]{HADR})}
 Let $h: \mathbb{R}^n \rightarrow \mathbb{R}$ be a differentiable strongly star quasiconvex func\-tion with modulus $\gamma > 0$ with $L$-Lipschitz continuous gradient and $\overline{x} \in{\rm argmin}_{\mathbb{R}^n} h$. Take $0 < \rho < \min\{ \frac{2 \gamma}{L}, 1\}$. Let $(x_k)_{k \in \mathbb{N}}$ be generated by: 
 \begin{align*}
\left\{
 \begin{array}{ll}
   y_{k} = \alpha_{k} x_{n} + (1 - \alpha_{k}) z_{k}, \\ [2mm]
   x_{k+1} = y_{k} - s \nabla h(y_{k}), \\ [2mm]
   z_{k+1} = \beta_{k} z_{k} + (1 - \beta_{k}) y_{k} - \eta_{k} \nabla h(y_{k}),
 \end{array}
 \right.
\end{align*}
with $z_{0} = x_{0}$ and parameters
\[
 s = \rho^2 \frac{\gamma}{L^2}, \quad \alpha_k = \frac{1}{1 + \sqrt{\gamma s}}, \quad \beta_k = 1 - \rho \sqrt{\gamma s}, \quad \eta_k = \frac{\sqrt{s}}{\sqrt{\gamma}}.
\]
Then,
\begin{equation}\label{lin:nesterov}
 h(x_k) - h^* \le \frac{2}{\rho} \left(1 - \rho^2 \frac{\gamma}{L} \right)^{k} (h(x_0) - h^*), ~ \forall ~ k \in \mathbb{N}.
\end{equation}
\end{theorem}

As a direct consequence of the above theorem, we have the following convergence rate result.

\begin{corollary}{\bf (Convergence rate)} \label{N:rates}
 Under the assumptions of Theorem \ref{convergenceNesterov2}, we have the following convergence rate
 \begin{align*}
 \|x_k - x^*\| \le \sqrt{\frac{4 (2 - \rho)}{\rho^2 \gamma} \left(1 - \rho^2 \frac{\gamma}{L} \right)^k (h(x_0) - h^*)}.
 \end{align*}
\end{corollary}

\begin{proof}
 We simply use the quadratic growth property for strongly quasar-convex functions (see \cite[Corollary 1]{HSS}), which says that $ \frac{\rho \gamma}{2 (2 - \rho)} \lVert x - \overline{x} \rVert^{2} \leq h(x) - h^{*}$ in relation \eqref{lin:nesterov}.
\end{proof}

%

\section{Linear Convergence of PPA on Star-Shaped Sets}\label{sec:05}

As in the previous section, we emphasize that the function $h$ is assumed to be strongly star quasiconvex with modulus $\gamma > 0$.

Let us consider the classical version of the proximal point algorithm.

\begin{algorithm}[H]
\caption{PPA for Strongly Star Quasiconvex Functions (PPA-SSQC)}\label{ppa:sqcx}
\begin{description}
 \item[Step 0.]  Let $K \subseteq \mathbb{R}^{n}$ be a closed and star-shaped set at the unique minimizer of $h$ on $K$. Take $x^{0} \in K$, $k=0$, and  a sequence of positive numbers $\{\beta_{k}\}_{k \in \mathbb{N}_{0}}$. 

 \item[Step 1.] Take
 \begin{equation}\label{step:sqcx}
  x^{k+1} \in {\rm Prox}_{\beta_{k} h} (K, x^{k}).
 \end{equation}

 \item[Step 2.] If $x^{k+1} = x^{k}$, then STOP, $\{x^{k}\} = {\rm argmin}_{K}\,h$. Otherwise, take $k=k+1$ and go to Step 1.
 \end{description}
\end{algorithm}

Note that the iterates exist because $h$ is strongly star quasiconvex (thus $2$-supercoercive), lsc, and the stopping criterion holds by Proposition \ref{prop:fixed}.

\medskip

In the next result, we ensure linear convergence for the PPA for strongly star
quasiconvex functions ($\gamma > 0$) on a closed and star-shaped set $K$ at the minimizer.

\begin{theorem}\label{conver:sqcx} {\bf (Linear convergence of the PPA on star-shaped domains)}
 Let $K \subseteq \mathbb{R}^{n}$ be a closed set and $h: \mathbb{R}^{n} \rightarrow \overline{\mathbb{R}}$ a proper lsc function such that $K \subseteq {\rm dom}\,h$. Suppose that $\overline{x} \in {\rm argmin}_{K}\,h$, $K$ is star-shaped at $\overline{x}$ and $h$ is strongly star quasiconvex function on $K$ with modulus $\gamma > 0$. Take $\beta^{\prime} > 0$ and a sequence of positive numbers $\{\beta_{k}\}_{k \in \mathbb{N}}$ such that
\begin{equation}\label{param:cond1}
 0 < \beta^{\prime} \leq \beta_{k}, ~ \forall ~ k \in \mathbb{N}_{0}.
\end{equation}
 Then, the sequence $\{x^{k}\}_{k \in \mathbb{N}}$, ge\-ne\-ra\-ted by Algorithm \ref{ppa:sqcx}, is a mi\-ni\-mi\-zing sequence of $h$ on $K$ and converges linearly to the unique solution $\overline{x} = {\rm argmin}_{K}\,h$ with rate of at least
 \begin{equation}\label{linear:rate}
  \frac{1}{1 + \beta^{\prime} \gamma} \in \, ]0, 1[.
 \end{equation}
\end{theorem}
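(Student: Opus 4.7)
The plan is to reduce the argument to a one-step recursion of the form $\|x^{k+1}-\overline{x}\|^{2}\le q\,\|x^{k}-\overline{x}\|^{2}$ with $q = 1/(1+\beta'\gamma)$, by combining Proposition \ref{prox:sqcx} with the three-point identity \eqref{3:points}. The whole argument is essentially the specialization of Proposition \ref{prox:sqcx} to $z=x^{k}$, $x^{*}=x^{k+1}$, $\beta=\beta_{k}$, followed by elementary algebra.

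First I would note that each iterate $x^{k+1}$ is well defined and lies in $K$. Existence of $x^{k+1}\in\operatorname{Prox}_{\beta_{k}h}(K,x^{k})$ follows from properness and lower semicontinuity of $h$ together with the $2$-supercoercivity given by Proposition \ref{spconvex:super}, which guarantees that the proximal subproblem on the closed set $K$ attains its minimum. Since $\overline{x}\in K$ and $K$ is star-shaped at $\overline{x}$, all hypotheses of Proposition \ref{prox:sqcx} are in force along the whole sequence.

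Second, applying Proposition \ref{prox:sqcx} gives
\begin{equation*}
\frac{\gamma}{2}\,\|x^{k+1}-\overline{x}\|^{2}\;\le\;\frac{1}{\beta_{k}}\,\langle x^{k+1}-x^{k},\,\overline{x}-x^{k+1}\rangle.
\end{equation*}
Using identity \eqref{3:points} with $x=x^{k+1}$, $z=x^{k}$, $y=\overline{x}$, the right-hand side equals $\tfrac{1}{2\beta_{k}}\bigl(\|x^{k}-\overline{x}\|^{2}-\|x^{k+1}-x^{k}\|^{2}-\|x^{k+1}-\overline{x}\|^{2}\bigr)$. Multiplying by $2\beta_{k}$ and rearranging yields the fundamental recursion
\begin{equation*}
(1+\beta_{k}\gamma)\,\|x^{k+1}-\overline{x}\|^{2}+\|x^{k+1}-x^{k}\|^{2}\;\le\;\|x^{k}-\overline{x}\|^{2}.
\end{equation*}
Dropping the nonnegative term $\|x^{k+1}-x^{k}\|^{2}$ and using $\beta_{k}\ge\beta'$ gives $\|x^{k+1}-\overline{x}\|^{2}\le\tfrac{1}{1+\beta'\gamma}\|x^{k}-\overline{x}\|^{2}$, so iterating produces linear convergence to $\overline{x}$ with rate at most $1/(1+\beta'\gamma)\in\,]0,1[$.

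Third, to see that $\{x^{k}\}$ is a minimizing sequence, I would use the defining optimality of the proximal step with test point $x=\overline{x}\in K$:
\begin{equation*}
h(x^{k+1})\;\le\;h(\overline{x})+\frac{1}{2\beta_{k}}\bigl(\|\overline{x}-x^{k}\|^{2}-\|x^{k+1}-x^{k}\|^{2}\bigr)\;\le\;h(\overline{x})+\frac{1}{2\beta'}\|x^{k}-\overline{x}\|^{2}.
\end{equation*}
Since $\|x^{k}-\overline{x}\|\to 0$ by the previous step and $h(x^{k+1})\ge h(\overline{x})$ because $\overline{x}\in\operatorname{argmin}_{K}h$, we obtain $h(x^{k})\to h(\overline{x})=\min_{K}h$, i.e., $\{x^{k}\}$ is minimizing.

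There is no real obstacle here: the heart of the work was already done in Proposition \ref{prox:sqcx}, where the star-shapedness of $K$ at $\overline{x}$ was used to justify taking convex combinations of $\overline{x}$ and the current iterate in the strong star quasiconvexity inequality. The only point that deserves a little care is that one must not invoke nonexpansiveness of the proximal operator (which need not hold in the nonconvex/star-shaped setting), but instead rely on the one-sided bound from Proposition \ref{prox:sqcx}, which is exactly what makes the telescoping/contraction argument work.
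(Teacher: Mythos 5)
Your proof is correct and follows essentially the same route as the paper: both derive the contraction $(1+\beta_{k}\gamma)\,\lVert x^{k+1}-\overline{x}\rVert^{2}+\lVert x^{k+1}-x^{k}\rVert^{2}\le\lVert x^{k}-\overline{x}\rVert^{2}$ from Proposition \ref{prox:sqcx}, your use of identity \eqref{3:points} being just the paper's expansion of $\lVert x^{k+1}-\overline{x}\rVert^{2}$ in disguise. The only difference is in the minimizing-sequence step, where the paper appeals to the monotone decrease of $\{h(x^{k})\}_{k}$ while you test the proximal inequality at $\overline{x}$ and use $\lVert x^{k}-\overline{x}\rVert\to 0$; your variant is, if anything, the more complete justification.
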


\begin{proof}
 If $x_{k+1}=x_{k}$ for some $k\in \mathbb{N}$, then the algorithm stops and
 $x_{k}\in \mathrm{argmin}_{K}\,h$ by Proposition \ref{prop:fixed}. So we assume
 that $x_{k+1}\neq x_{k}$ for all $k\in \mathbb{N}$.

 Since $x^{k+1} \in {\rm Prox}_{\beta_{k} h} (K, x^{k})$, taking $z=x^{k}$, $x^{*}=x^{k+1}$ and $x=x^{k}$ in \eqref{lin:conv1}, and since we are assuming $x^{k+1} \neq x^{k}$, we obtain
\begin{align}\label{tel:one}
 h(x^{k+1}) < h(x^{k+1}) + \frac{1}{2 \beta_{k}} \lVert x^{k+1} - x^{k} \Vert^{2} \leq h(x^{k}). 
\end{align}
Hence, $\{h(x^{k})\}_{k\in \mathbb{N}_{0}}$ is a decreasing sequence.

Since $x^{k+1} \in \mathrm{Prox}_{\beta_{k} h} (K, x^{k})$, by Proposition \ref{prox:sqcx}, we have
\begin{align}
 \langle x^{k+1} - x^{k}, x^{k+1} - \overline{x} \rangle & \leq -\frac{\gamma \beta_{k}}{2} \lVert x^{k+1} - \overline{x} \rVert^{2}, ~ \forall ~ y \in K. \label{key:01}
\end{align}
 
 On the other hand, 
 \begin{align}
  \lVert x^{k+1} - \overline{x} \rVert^{2} & = \lVert x^{k+1} - x^{k} + x^{k} - \overline{x} \rVert^{2} \notag \\
  & = \lVert x^{k+1} - x^{k} \rVert^{2} + \lVert x^{k} - \overline{x} \rVert^{2} + 2\langle x^{k+1} - x^{k}, x^{k} - \overline{x} \rangle \notag \\
  & = \lVert x^{k} - \overline{x} \rVert^{2} -\lVert x^{k+1} - x^{k} \rVert^{2} + 2 \langle x^{k+1} - x^{k}, x^{k+1} - \overline{x} \rangle. \notag 
 \end{align}
Then, using \eqref{key:01}, 

$(1 + \beta_{k} \gamma) \lVert x^{k+1} - \overline{x} \rVert^{2} \leq \lVert x^{k} - \overline{x} \rVert^{2} - \lVert x^{k+1} - x^{k} \rVert^{2},$

\noindent and so $\lVert x^{k+1} - \overline{x} \rVert^{2} \leq \frac{1}{(1 + \beta^{\prime} \gamma) } \lVert x^{k} - \overline{x} \rVert^{2}$. 

\noindent Hence, $\{x^{k}\}_{k}$ converges linearly to the unique solution $\overline{x} \in {\rm argmin}_{K}\,h$, with convergence rate of at least $\frac{1}{1 + \beta^{\prime} \gamma} \in \, ]0, 1[$, and using \eqref{tel:one} 
we conclude that it is a minimizing sequence, i.e., $h(x^{k}) \downarrow \min_{x \in K}\,h(x)$, which completes the proof.
\end{proof}

As a corollary, we obtain linear convergence for the PPA applied to strongly quasiconvex functions, a results which was not included in \cite{Lara-9}.

\begin{corollary}
 Let $K \subseteq \mathbb{R}^{n}$ be a closed and convex set, $h: \mathbb{R}^{n} \rightarrow \overline{\mathbb{R}}$ a pro\-per, lsc, and strongly quasiconvex function on $K \subseteq {\rm dom}\,h$ with modulus
 $\gamma > 0$. Let $\beta^{\prime} > 0$ and $\{\beta_{k}\}_{k \in \mathbb{N}}$ be a sequence of positive numbers such that \eqref{param:cond1} holds. Then, the sequence $\{x^{k}\}_{k \in \mathbb{N}}$, ge\-ne\-ra\-ted by relation \eqref{step:sqcx}, is a mi\-ni\-mi\-zing sequence of $h$ and converges linearly to the unique solution $\overline{x} = {\rm argmin}_{K}\,h$ with rate of at least
 \begin{equation}
  \frac{1}{1 + \beta^{\prime} \gamma} \in \, ]0, 1[. \notag
 \end{equation}
\end{corollary}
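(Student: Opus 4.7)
The plan is to show that this corollary is a direct consequence of Theorem \ref{conver:sqcx}, obtained by verifying that its hypotheses are implied by the present assumptions. Since the strong quasiconvex case sits inside the strong star quasiconvex framework developed in the paper, the proof reduces to checking three things: existence and uniqueness of the minimizer on $K$, star-shapedness of $K$ at that minimizer, and strong star quasiconvexity of $h$ on $K$.

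First, I would invoke Lemma \ref{exist:unique}: since $K$ is closed and convex and $h$ is proper, lsc and strongly quasiconvex with modulus $\gamma>0$ on $K$, the set ${\rm argmin}_{K}\,h$ is a singleton, call it $\overline{x}$. Second, because $K$ is convex, it is star-shaped at every point of $K$ (every segment $[\overline{x},x]$ with $x\in K$ lies in $K$), and in particular at $\overline{x}$. Third, by Proposition \ref{inclusion}, strong quasiconvexity of $h$ with modulus $\gamma>0$ implies that $h$ is strongly star quasiconvex with the same modulus $\gamma>0$ (with respect to the unique minimizer $\overline{x}$ guaranteed in the first step).

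With these three ingredients in place, all hypotheses of Theorem \ref{conver:sqcx} are fulfilled: $K$ is closed, $h$ is proper and lsc, $\overline{x}\in{\rm argmin}_{K}\,h$, $K$ is star-shaped at $\overline{x}$, $h$ is strongly star quasiconvex on $K$ with modulus $\gamma>0$, and the stepsize sequence $\{\beta_{k}\}_{k}$ satisfies \eqref{param:cond1}. Theorem \ref{conver:sqcx} then yields that the sequence $\{x^{k}\}_{k}$ generated by \eqref{step:sqcx} is well defined, is a minimizing sequence of $h$ on $K$, and converges linearly to the unique minimizer $\overline{x}$ with rate of at least $\frac{1}{1+\beta'\gamma}\in\,]0,1[$, which is exactly the conclusion of the corollary.

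There is essentially no obstacle in this proof beyond checking the inclusions; the real work was done in Theorem \ref{conver:sqcx}, and the present statement simply highlights that the result of \cite{Lara-9} (where linear convergence of PPA for strongly quasiconvex functions was not established) is now recovered as a byproduct. The only subtle point to mention is that one must use Lemma \ref{exist:unique} before invoking Proposition \ref{inclusion}, since the definition of strong star quasiconvexity is anchored at a specific minimizer and Proposition \ref{inclusion} requires ${\rm argmin}_{K}\,h\neq\emptyset$.
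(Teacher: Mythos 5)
Your proof is correct and follows exactly the route the paper intends (the paper omits the proof, presenting the statement as an immediate consequence of Theorem \ref{conver:sqcx}): uniqueness of the minimizer via Lemma \ref{exist:unique}, star-shapedness of the convex set $K$ at $\overline{x}$, and the inclusion of strongly quasiconvex functions into strongly star quasiconvex ones via Proposition \ref{inclusion}. Your remark about checking ${\rm argmin}_{K}\,h \neq \emptyset$ before invoking Proposition \ref{inclusion} is a correct and worthwhile observation.
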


The scope of Theorem \ref{conver:sqcx} is analyzed below.

\begin{remark}\label{rem:compa1}
 \begin{itemize}
  \item[$(a)$] Since $\beta^{'} > 0$ in \eqref{param:cond1}, the linear rate in 
  \eqref{linear:rate} can be as fast as we want, a situation which also happens in the strongly convex case. Furthermore, the linear rate for strongly star quasiconvex functions $\frac{1}{1 + \beta^{\prime} \gamma}$ is exactly as good as in the strongly convex case (see, for instance, \cite[Example 23.40$(ii)$]{BC-2}). 
  
  \item[$(b)$] The function in Example \ref{exam:02} is strongly star quasiconvex being neither weakly convex nor DC nor strongly quasiconvex nor strongly quasar-convex, i.e., the results from \cite{AN,ABS,BB,BLL,CP,IPS,Lara-9,PEN,SSC} and re\-fe\-ren\-ces therein cannot be applied to this function.

  \item[$(c)$] In contrast to strongly convex and strongly quasar-convex functions (see \cite{rock-1976} and \cite{BLL}, respectively), we cannot provide convergence rate for the functions values $\{h(x^{k})\}_{k}$. This limitation is of course expected, since we are including strongly quasiconvex functions, a case in which the analysis of the function values is not possible.
 \end{itemize}
\end{remark}

\begin{remark}
To the best of our knowledge, Theorem \ref{conver:sqcx} provides the first result guaranteeing linear convergence of the sequence generated by thev PPA on closed star-shaped sets, without requiring convexity. It is important to note that this is primarily a theoretical advancement. In practice, the implementation of such an algorithm remains challenging, as closed-form expressions for the proximity operator on star-shaped sets, as well as efficient subroutines for minimization over them are largely unexplored.
\end{remark}

\section{Conclusions and Future Work}\label{sec:06}

We unify the analysis of linear convergence for first-order methods on ge\-ne\-ra\-li\-zed convexity theory by introducing the class of (strongly) star quasiconvex functions. We provide a comprehensive study of this class, presenting several characterizations and useful properties for both nonsmooth and differentiable cases. Our theoretical results have promising implications for nonconvex optimization and open the door to further developments in computational methods on star-shaped sets.

We hope this work provides new avenues for studying the theoretical pro\-per\-ties of functions with star-shaped sublevel sets and their applications in more sophisticated methods, such as splitting-type algorithms (see, for instance, \cite{ABS, BB}). It also lays the groundwork for exploring potential connections between star quasiconvex functions and reference point theory in Prospect Theory (see \cite{KT, TK}).

\section{Declarations}







\subsection{Availability of supporting data}

No data sets were generated during the current study. 

\subsection{Author Contributions}

 All authors contributed equally to the study conception, design and implementation and wrote and corrected the manuscript.

\subsection{Conflicts of Interest}

There are no conflicts of interest or competing interests related to this manuscript.

\subsection{Funding}

This work has been partially su\-ppor\-ted by NAFOSTED---Vietnam through grant 101.01-2025.61 (Khanh), and by
ANID---Chile through Fondecyt Regular 1241040 (Lara).

\subsection{Acknowledgments}

The author wishes to thank both reviewers for their comments and remarks that helped to improve the quality of this paper. Parts of this work were carried out when F. Lara was vi\-si\-ting the Ton Duc Thang University (TDTU) in Ho Chi Minh City, Vietnam, during April 2025, when P.Q. Khanh was visiting the Universidad de Tarapac\'a (UTA), in Arica, Chile, during September 2025. 
The authors wish to thank the above institutions for their hospitality.

\end{document}